\documentclass[12pt,reqno]{amsart}
\usepackage{amsthm,amsfonts,amssymb,euscript,fancyhdr}

\usepackage{graphicx} 
\usepackage[font=small,labelfont=bf]{caption} 

\newcommand{\bl}{\begin{lemma}}
\newcommand{\el}{\end{lemma}}
\def\beaa{\begin{eqnarray*}}
\def\eeaa{\end{eqnarray*}}
\def\ba{\begin{array}}
\def\ea{\end{array}}
\def\be#1{\begin{equation} \label{#1}}
\def \eeq{\end{equation}}

\def\a{{\alpha}}

\def\b{{\beta}}
\def\be{{\beta}}

\def\de{\delta}

\def\la{\lambda}

\def\Si{\Sigma}

\def\Om{\Omega}

\def\Th{\Theta}

\def\nab{\nabla}
\def\varep{\varepsilon}

\def\pr{{\partial}}

\def\AA{{\mathcal A}}

\def\CC{{\mathcal C}}
\def\MM{{\mathcal M}}

\def\HH{{\mathcal H}}
\def\LL{{\mathcal L}}

\def\DD{{\mathcal D}}

\def\D{{\bf D}}

\def\g{{\bf g}}

\def\HHH{{\mathbb{H}}}

\def\RRR{{\mathbb R}}

\def\f12{{\frac 1 2}}

\DeclareMathOperator{\Div}{\mathrm{div}}

\def\half{\frac{1}{2}}

\newcommand{\RRRic}{\mathrm{Ric}}

\newcommand{\ol}{\overline}

\def\ni{\noindent}


\def\tr{\mathrm{tr}}

\def\f{\widetilde{f}}

\newcommand{\ubp}{\underline{B}^+}



\newtheorem{theorem}{Theorem}[section]
\newtheorem{lemma}[theorem]{Lemma}
\newtheorem{proposition}[theorem]{Proposition}

\newtheorem{definition}[theorem]{Definition}
\newtheorem{remark}[theorem]{Remark}
\newtheorem{conjecture}[theorem]{Conjecture}

\setlength{\parindent}{0pt}

\setlength{\textwidth}{16cm} \setlength{\oddsidemargin}{0cm}
\setlength{\evensidemargin}{0cm}
\numberwithin{equation}{section}
\setcounter{tocdepth}{4}

\begin{document}

\title{The localised bounded $L^2$-curvature theorem}

\author{Stefan Czimek}

\address{The Fields Institute for Research in Mathematical Sciences, Toronto, Canada}

\begin{abstract} In this paper, we prove a localised version of the bounded $L^2$-curvature theorem of Klainerman-Rodnianski-Szeftel \cite{KRS}. More precisely, we consider initial data for the Einstein vacuum equations posed on a compact spacelike hypersurface $\Si$ with boundary, and show that the time of existence of a classical solution depends only on an $L^2$-bound on the Ricci curvature, an $L^4$-bound on the second fundamental form of $\pr \Si \subset \Si$, an $H^1$-bound on the second fundamental form, and a lower bound on the volume radius at scale $1$ of $\Si$. \newline
Our localisation is achieved by first proving a localised bounded $L^2$-curvature theorem for small data posed on $B(0,1)$, and then using the scaling of the Einstein equations and a low regularity covering argument on $\Si$ to reduce from large data on $\Si$ to small data on $B(0,1)$. The proof uses the author's previous work \cite{Czimek1}, \cite{Czimek21}, and the bounded $L^2$-curvature theorem \cite{KRS} as black boxes.
\end{abstract}

\maketitle
\tableofcontents


\section{Introduction} \label{sec:introduction}

\subsection{The Cauchy problem of general relativity}

A Lorentzian $4$-manifold $(\mathcal{M},{\bf g})$ solves the \emph{Einstein vacuum equations} if
\begin{align} \label{EinsteinVacuumEquationsIntroJ}
\mathbf{Ric} = 0,
\end{align}
where $\mathbf{Ric}$ denotes the Ricci tensor of the Lorentzian metric ${ \bf g}$. The Einstein vacuum equations are invariant under diffeomorphisms, and therefore one considers equivalence classes of solutions. Expressed in general coordinates, \eqref{EinsteinVacuumEquationsIntroJ} is a non-linear geometric coupled system of partial differential equations of order $2$ for ${\bf g}$. In a suitable gauge, namely the so-called \emph{wave coordinates}\footnote{On a Lorentzian $4$-manifold $(\MM,\mathbf{g})$, wave coordinates $(x^0,x^1,x^2,x^3)$ satisfy by definition
\begin{align*}
\square_{\mathbf{g}} x^\a =0 \, \text{ for } \a=0,1,2,3.
\end{align*}
The Einstein equations reduce in wave coordinates to
\begin{align*}
\square_{\mathbf{g}} (\mathbf{g}_{\a \b}) = \mathcal{N}_{\a\b}(\mathbf{g},\pr_\mu \mathbf{g}), \, \text{ for } \a,\be=0,1,2,3,
\end{align*}
where $\mathcal{N}_{\a \b}(\mathbf{g}, \pr \mathbf{g})$ is a non-linearity that is linear in $\mathbf{g}$ and quadratic in $\pr_\mu \mathbf{g}$, $\mu=0,1,2,3$.}, it can be shown that \eqref{EinsteinVacuumEquationsIntroJ} is hyperbolic and hence corresponds to an evolution problem. \\

Initial data for the Einstein vacuum equations is specified by a triple $(\Si,g,k)$ where $(\Si,g)$ is a Riemannian $3$-manifold and $k$ a symmetric $2$-tensor on $\Si$ satisfying the \emph{constraint equations},
\begin{align} \begin{aligned} 
\mathrm{R}_{scal} &= \vert k \vert_g^2 - (\tr_g k )^2, \\
\Div_g k &= d(\tr_g k). \label{eq:CONST1}
\end{aligned} \end{align}
Here $\mathrm{R}_{scal}$ and $d$ denote the scalar curvature of $g$ and the exterior derivative on $\Si$, respectively, and
\begin{align*}
\vert k \vert_g^2:= g^{ij} g^{lm} k_{il} k_{jm}, \, \tr_g k: = g^{ij}k_{ij}, \, (\Div_g k )_l := g^{ij} \nab_i k_{jl},
\end{align*}
where $\nab$ denotes the covariant derivative on $(\Si,g)$ and we tacitly use, as in the rest of this paper, the Einstein summation convention.\\

In the seminal \cite{ChoquetBruhatFirst} it is shown that the above initial value formulation is well-posed. In the future development $(\mathcal{M},{\bf g})$ of given initial data $(\Si,g,k)$, the $3$-manifold $\Si \subset \mathcal{M}$ is a spacelike Cauchy hypersurface with induced metric $g$ and second fundamental form $k$. See for example \cite{Wald} or \cite{Ringstrom} for details. \\

In the rest of this paper, we assume that the initial hypersurface $\Si$ is \emph{maximal}, that is, 
$$\tr_g k =0 \,\,  \text{ on } \Si.$$ 
This assumption is sufficiently general for our purposes, see \cite{Bartnik1}. In particular, on a maximal hypersurface $\Si$, the constraint equations \eqref{eq:CONST1} reduce to the \emph{maximal constraint equations},
 \begin{align*}
\mathrm{R}_{scal} &= \vert k \vert_g^2, \\
\Div_g k &= 0,\\
\tr_g k&=0.
\end{align*}


\subsection{Weak cosmic censorship and the bounded $L^2$-curvature theorem}

One of the main open questions of mathematical relativity is the so-called \emph{weak cosmic censorship conjecture} formulated by Penrose, see \cite{PenroseConjecture}.
\begin{conjecture}[Weak cosmic censorship conjecture] \label{conj2}
For a generic solution to the Einstein equations, all singularities forming in the context of gravitational collapse are covered by black holes.
\end{conjecture}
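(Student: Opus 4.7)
The stated conjecture is one of the outstanding open problems of mathematical relativity, and the sketch below is necessarily a strategic program rather than a complete argument. The overall idea is to combine a sharp low-regularity extension criterion, formulated in terms of the quantities controlled by the localised bounded $L^2$-curvature theorem that is the main subject of this paper, with a censorship mechanism along the future boundary of a maximal globally hyperbolic development.

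First I would fix a precise notion of genericity on the space of asymptotically flat vacuum initial data $(\Si, g, k)$, for instance openness and density in a suitable topology, explicitly excluding the known symmetry-reduced naked singularities of Christodoulou. Given such data, I would study the maximal globally hyperbolic development $(\mathcal{M}, \mathbf{g})$ and analyse the geometry of its future boundary via a maximal foliation $\{\Si_t\}$, relying on Bartnik's theory for the existence of maximal hypersurfaces in asymptotically flat spacetimes.

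Second, the analytical heart would be a low-regularity breakdown criterion applied along the foliation. Suppose that $\Si_t \subset \mathcal{M}$ approaches a would-be boundary point along which the Ricci curvature of $\Si_t$ stays bounded in $L^2$, the second fundamental form of $\pr \Si_t \subset \Si_t$ stays bounded in $L^4$, the second fundamental form $k$ of $\Si_t$ stays bounded in $H^1$, and the volume radius at scale one stays bounded below. Then the localised bounded $L^2$-curvature theorem of this paper yields a uniform positive time of classical existence and hence an extension of the spacetime past that point. The crucial feature here is the \emph{localisation}: one cannot exclude a priori that curvature concentrates on a distant part of $\Si_t$, so the global bounded $L^2$-curvature theorem of Klainerman--Rodnianski--Szeftel would not by itself close this argument.

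Third, and this is where the genuine difficulty lies, one must show that for generic initial data any failure of the above hypotheses is associated with the formation of a trapped region, so that the offending singularity is necessarily enveloped by an event horizon. The current state of the art combines Christodoulou's trapped surface formation theorem and its refinements by Klainerman--Rodnianski with stability and genericity arguments, but no known technique suffices: one would need a quantitative monotonicity or instability statement ruling out, for an open dense set of initial data, that curvature concentrates without being enveloped by an apparent horizon. This last step is the main obstacle and precisely the reason the conjecture remains open; the contribution I could realistically make from the present paper is to supply the optimal extension criterion that converts a hypothetical censorship mechanism into a rigorous global statement.
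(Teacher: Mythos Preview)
The statement you have attempted is explicitly labelled a \emph{conjecture} in the paper, and the paper does not prove it; there is therefore no ``paper's own proof'' against which to compare your attempt. The weak cosmic censorship conjecture is invoked solely as motivation for pursuing low-regularity control of the Einstein equations, and the paper's actual contribution is Theorem~\ref{thm:bl2locintro1}, the localised bounded $L^2$-curvature theorem.

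You yourself acknowledge that your proposal is ``a strategic program rather than a complete argument'' and that ``the conjecture remains open.'' That is accurate. Your outline correctly identifies the role that a localised low-regularity continuation criterion could play in such a program, and your description of the missing ingredient---a genericity or instability mechanism forcing curvature blow-up to be accompanied by trapped-surface formation---is an honest statement of the obstruction. But none of this constitutes a proof, nor does the paper claim one: the third step of your program is not a gap in an otherwise sound argument, it is the entire open problem. So the right assessment is simply that the paper contains no proof of this conjecture, and your proposal, while a reasonable sketch of how the paper's main theorem might eventually be used, is not a proof either.
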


In the ground-breaking \cite{Chr9}, Christodoulou proves Conjecture \ref{conj2} for the Einstein-scalar field equations under the assumption of spherical symmetry. In Christodoulou's proof, a low regularity control of the Einstein equations is essential for analysing the dynamical formation of black holes. More precisely, in \cite{Chr7} Christodoulou proves a well-posedness result for initial data which is bounded only in a scale-invariant BV-norm, and subsequently uses this framework to establish the formation of trapped surfaces in \cite{Chr9}. \\

The result in \cite{Chr9} strongly suggests that a crucial step to prove the weak cosmic censorship in the absence of symmetry is to control solutions to the Einstein vacuum equations in very low regularity\footnote{Note that bounded variation norms are not suitable outside of spherical symmetry. In the absence of spherical symmetry, regularity should be measured with respect to $L^2$-based spaces, see \cite{Stein}.}. The current state-of-the-art with respect to low regularity control of solutions to the Einstein vacuum equations is the \emph{bounded $L^2$-curvature theorem} by Klainerman-Rodnianski-Szeftel, see Theorem 2.2 in \cite{KRS}. We refer to the introduction of \cite{KRS} for a historical account of the developments leading to this result. 

\begin{theorem}[The bounded $L^2$-curvature theorem, \cite{KRS}] \label{thmbl2intro1largedata}
Let $(\mathcal{M}, {\bf g})$ be an asymptotically flat solution to the Einstein vacuum equations together with a maximal foliation by space-like hypersurfaces $\Si_t$ defined as level sets of a time function $t$. Assume that the initial slice $(\Si_0, g,k)$ is such that $\Si_0 \simeq \RRR^3$ and 
\begin{align}\begin{aligned}
\Vert \RRRic \Vert_{L^2(\Si_0)} < \infty, \, \Vert \nab k \Vert_{L^2(\Si_0)} < \infty \text{ and } r_{vol}(\Si_0,1) >0,
\end{aligned} \label{eq:conditionsepL2} \end{align}
where $r_{vol}(\Si_0,1)$ is the volume radius\footnote{The volume radius of $(\Si_0,g)$ at scale $1$ is defined as
\begin{align*}
r_{vol}(\Si_0,1) := \inf_{p \in \Si_0} \inf_{0<r<1} \frac{\mathrm{vol}_g \left( B_g(p,r) \right)}{\frac{4 \pi}{3}r^3},
\end{align*}
where $B_g(p,r)$ denotes the geodesic ball of radius $r$ centered at the point $p$.} of $(\Si_0,g)$ at scale $1$, and $\RRRic$ denotes the Ricci tensor of $g$. Then 
\begin{enumerate}
\item {\bf $L^2$-regularity.} There exists a time 
\begin{align*}
T=T(\Vert \RRRic \Vert_{L^2(\Si_0)}, \Vert \nab k \Vert_{L^2(\Si_0)}, r_{vol}(\Si_0,1)) >0, 
\end{align*}
and a constant
\begin{align*}
C=C(\Vert \RRRic \Vert_{L^2(\Si_0)}, \Vert \nab k \Vert_{L^2(\Si_0)}, r_{vol}(\Si_0,1)) >0, 
\end{align*}
such that the following control holds on $0 \leq t \leq T$.
\begin{align*}
\Vert {\bf R} \Vert_{L^\infty_t L^2(\Si_t)} \leq C,\, \Vert \nab k_t \Vert_{L^\infty_t L^2(\Si_t)} \leq C, \, \inf\limits_{0\leq t \leq T}  r_{vol}(\Si_t, 1) \geq \frac{1}{C},
\end{align*}
where ${\bf R}$ denotes the Riemann curvature tensor of $(\MM,\g)$, and $g_t$ and $k_t$ are the induced metric and the second fundamental form of $\Si_t$, respectively.
\item {\bf Higher regularity.} In case of higher regularity of the initial data, we have for integers $m\geq1$, within the same time interval as in part (1), the higher derivative estimate
\begin{align*}
\sum\limits_{\vert \a \vert \leq m } \Vert \D^{(\a)} \mathbf{R} \Vert_{L^{\infty}_{t}L^2(\Si_t)} \leq C_m \sum\limits_{\vert i \vert \leq m} \Big( \Vert \nab^{(i)} \RRRic \Vert_{L^2(\Si_0)}+ \Vert \nab^{(i)} \nab k \Vert_{L^2(\Si_0)} \Big),
\end{align*}
where the constant $C_m>0$ depends only on the previous $C$ and $m$.
\end{enumerate}
\end{theorem}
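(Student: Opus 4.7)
The strategy I would follow is the one pioneered in \cite{KRS}: reduce the problem to proving a Strichartz-type estimate for the geometric wave equation $\square_{\bf g}\phi=0$ on an Einstein background of limited regularity, and use that estimate to close a bootstrap for the curvature. One first expresses the Einstein vacuum equations in a suitable gauge --- either wave coordinates or, more efficiently, a Cartan tetrad formalism in which the connection $1$-form and the curvature $2$-form satisfy a Yang--Mills-type structure. The Bianchi identities then give a hyperbolic system for ${\bf R}$, and standard energy identities reduce the control of $\Vert {\bf R}\Vert_{L^\infty_t L^2(\Si_t)}$ to an estimate of the $L^2_t L^\infty_x$ norm of the first derivatives of ${\bf g}$. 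This is exactly the borderline missed by classical Strichartz theory for wave equations with rough coefficients, so the nonlinear structure of Einstein's equations --- together with maximality of $\Si_t$ and the volume radius assumption, which prevent degeneration of the spatial geometry --- must be exploited in an essential way.

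The heart of the proof is a Strichartz estimate for $\square_{\bf g}$ when ${\bf g}$ has curvature only in $L^2$. I would approach this through a plane-wave parametrix representation
\begin{align*}
\phi(t,x) \;=\; \int_{\mathbb{S}^2}\int_0^\infty e^{i\la u_\om(t,x)}\, a(\la,\om,t,x)\, F(\la\om)\, \la^2\, d\la\, d\om,
\end{align*}
where the phase $u_\om$ solves the eikonal equation ${\bf g}^{\a\b}\pr_\a u_\om\, \pr_\b u_\om = 0$ with initial plane-wave data on $\Si_0$ associated to the direction $\om\in\mathbb{S}^2$, and the amplitude $a$ is determined by transport along null generators. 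One then needs (i) $L^2$-boundedness of the operator $F\mapsto \phi(t,\cdot)$, (ii) sharp dispersive decay of the output, and (iii) smallness of the parametrix error $\square_{\bf g}\phi$ in a norm compatible with the Duhamel closure.

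Each of these ingredients demands strong control of the null geometry of the foliations $\{u_\om=\mathrm{const}\}$, and this is where the principal difficulty resides. Uniformly in $\om$, one must propagate $L^2$-type bounds on the null Ricci coefficients --- such as $\tr\chi$, $\chih$, $\et$, $\etab$ --- along the null hypersurfaces, knowing only that the background curvature lies in $L^2(\Si_t)$. Closing the null structure equations and the Codazzi-type constraints at this regularity requires sharp bilinear and trilinear estimates on the $2$-surfaces of intersection $S_{t,u}$, supported by a geometric Littlewood--Paley theory intrinsic to these surfaces to substitute for the missing Fourier analysis in a curved setting. Moreover, the $L^2$-boundedness claim (i) reduces, via $TT^\ast$, to an oscillatory integral with phase $u_\om-u_{\om'}$, and the requisite non-stationary analysis turns the sought Strichartz inequality into a subtle \emph{bilinear} estimate between parametrices associated to two null directions.

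The overall logic is a large bootstrap. Assume quantitative control of ${\bf R}$, $\nab k$ and of the null geometry of the $u_\om$-foliations on some interval $[0,T^\ast)$; derive the parametrix-based Strichartz inequality; feed it into the Bianchi energy estimate and into the transport-elliptic system for $k$; close the bootstrap by improving the constants and extending $T^\ast$ by continuity. Once $L^2$-regularity is in place, the higher-regularity claim in part (2) follows from commuting suitable geometric derivatives with the Bianchi equation and from persistence of regularity for the eikonal/null-geometry system. The main obstacle throughout is the self-consistency of the null geometry: one must avoid any loss of derivatives when recovering $u_\om$ from ${\bf g}$ and then using $u_\om$ to solve the very wave equation that governs ${\bf g}$. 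This apparent circularity is broken only by the fine geometric structure of the Einstein equations in the tetrad formalism, combined with the geometric Littlewood--Paley theory developed in \cite{KRS}.
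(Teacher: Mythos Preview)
The paper does not prove this theorem. Theorem~\ref{thmbl2intro1largedata} is quoted verbatim from \cite{KRS} as a black box and is used only as an ingredient (together with the extension procedure of \cite{Czimek1}) in the proof of the localised result, Theorem~\ref{thm:bl2locintro1}. There is therefore no ``paper's own proof'' to compare against.

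That said, your outline is a faithful high-level summary of the strategy actually carried out in \cite{KRS} and the companion papers \cite{J1}--\cite{J5}: reduction to a Strichartz/bilinear estimate for $\square_{\mathbf g}$ on a rough background, construction of a plane-wave parametrix via the eikonal equation, control of the null Ricci coefficients of the $u_\om$-foliations at the $L^2$-curvature level using a geometric Littlewood--Paley theory, and closure of a bootstrap through the Bianchi energy identities. As a sketch it is accurate; what it does not (and realistically cannot) convey is the sheer length and delicacy of the null-geometry estimates in \cite{J1}--\cite{J5}, where the absence of any derivative to spare forces every transport and elliptic equation on $S_{t,u}$ to be analysed sharply. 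If you were asked to reproduce the proof, the outline would be the right scaffolding, but each bullet is a paper in itself.
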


\begin{remark} In the above theorem, as in the rest of this paper, the statement should be understood as a continuation result for smooth solutions. That is, a solution to the Einstein vacuum equation developed from smooth initial data can smoothly be continued as long as condition \eqref{eq:conditionsepL2} holds. For details, see the introduction in \cite{KRS}. \end{remark}

The proof of Theorem \ref{thmbl2intro1largedata} is based on bilinear estimates, see \cite{KRS}, as well as Strichartz estimates, see \cite{J5}, in a low regularity spacetime where the Riemann curvature tensor is only assumed to be in $L^2$. The proof of these estimates relies crucially on a plane wave representation formula for the wave equation on low regularity spacetimes constructed in \cite{J1}-\cite{J5}. This plane wave representation formula is built as a Fourier integral operator which necessitates the assumption $\Si_0 \simeq \RRR^3$. \\

Given that the Einstein equations are hyperbolic and have finite speed of propagation, the assumption $\Si_0 \simeq \RRR^3$ in Theorem \ref{thmbl2intro1largedata} seems unnatural. Furthermore, gravitational collapse is studied in local domains of dependence, that is, given a \emph{compact} initial data set with boundary, one considers the development inside the future domain of dependence of the initial data set, see for example \cite{Chr9} and \cite{ChrFormationNonSpherical}. For these reasons, it appears important to localise Theorem \ref{thmbl2intro1largedata}, that is, to relax the condition $\Si_0 \simeq \RRR^3$, which is the main goal of this paper.


\subsection{The localised bounded $L^2$ curvature theorem}
The following is the main result of this paper.
\begin{theorem}[The localised bounded $L^2$-curvature theorem] \label{thm:bl2locintro1} Let $(\Si,g,k)$ be a maximal initial data set such that $(\Si,g)$ is a compact complete\footnote{A smooth Riemannian manifold with boundary is called complete if it is complete as a metric space. } smooth Riemannian manifold with boundary and assume that
\begin{align*}
\Vert \RRRic \Vert_{L^2(\Si)} < \infty, \,\, \Vert k \Vert_{L^4(\Si)} < \infty, \,\, \Vert \nab k \Vert_{L^2(\Si)} < \infty, \,\,
\Vert \Th \Vert_{L^4(\pr \Si)} < \infty, \,\, r_{vol}(\Si,1)>0. \end{align*}
where $\Th$ denotes the second fundamental form of $\pr \Si \subset \Si$. Then,
\begin{enumerate}
\item {\bf $L^2$-regularity.} There exists a radius
\begin{align*}
r=r(\Vert \RRRic \Vert_{L^2(\Si)}, \Vert k \Vert_{L^4(\Si)},\Vert \nab k \Vert_{ L^2(\Si)},\Vert \Th  \Vert_{ L^4(\pr \Si)},r_{vol}(\Si,1)  )>0,
\end{align*}
a time
\begin{align*}
T=T(\Vert \RRRic \Vert_{L^2(\Si)}, \Vert k \Vert_{L^4(\Si)},\Vert \nab k \Vert_{ L^2(\Si)},\Vert \Th  \Vert_{ L^4(\pr \Si)},r_{vol}(\Si,1)  )>0,
\end{align*}
and a constant
\begin{align*}
C=C(\Vert \RRRic \Vert_{L^2(\Si)}, \Vert k \Vert_{L^4(\Si)},\Vert \nab k \Vert_{ L^2(\Si)},\Vert \Th  \Vert_{ L^4(\pr \Si)},r_{vol}(\Si,1)  )>0,
\end{align*}
such that for every point $p \in \Si$, the future domain of dependence $\DD$ of the geodesic ball $B_g(p,r)$ admits a time function $t$ whose level sets $\Si_t$ are spacelike maximal hypersurfaces and foliate $\DD$ with $\Sigma_0=B_g(p,r) \subset \Si$, and the following control holds on $0\leq t\leq T$,
\begin{align*}
\Vert {\bf \mathbf{R}} \Vert_{L^\infty_{t} L^2(\Si_{t})} \leq C, \Vert k_t \Vert_{L^\infty_t L^4(\Si_t)} \leq C,  \Vert \nab k_t \Vert_{L^\infty_{t} L^2(\Si_{t})} \leq C, \inf\limits_{0\leq t \leq T} r_{vol}(\Si_{t},r) \geq \frac{1}{C}.
\end{align*}
\item {\bf Higher regularity.} In case of higher regularity, we have for $m\geq1$, within the same time interval as in part (1), the higher derivative estimate
\begin{align*}
\sum\limits_{\vert \a \vert \leq m } \Vert \D^{(\a)} \mathbf{R} \Vert_{L^{\infty}_{t}L^2(\Si_t)} \leq C_m \sum\limits_{\vert i \vert \leq m} \Big( \Vert \nab^{(i)} \RRRic \Vert_{L^2(\Si)}+ \Vert \nab^{(i)} \nab k \Vert_{L^2(\Si)} +1 \Big),
\end{align*}
where the constant $C_m>0$ depends only on the previous $C$ and $m$.
\end{enumerate}
\end{theorem}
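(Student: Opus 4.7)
The strategy, outlined in the abstract, is to reduce the large-data problem on the compact manifold $(\Si,g,k)$ to a small-data problem on the Euclidean unit ball $B(0,1) \subset \RRR^3$, which in turn is reduced to the bounded $L^2$-curvature theorem of \cite{KRS} by an extension of initial data across $\pr B(0,1)$.

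I would first prove a \emph{small-data localised} version: if $(\widetilde g,\widetilde k)$ is a maximal initial data set on $B(0,1)$ sufficiently close to $(\delta_{ij},0)$ in the scale-invariant norms appearing in the hypothesis, then it extends to a maximal asymptotically flat initial data set on $\RRR^3$ satisfying the hypotheses of Theorem \ref{thmbl2intro1largedata} with comparable norms. This extension step is precisely the content of the author's earlier works \cite{Czimek1,Czimek21}, which I would invoke as black boxes. Applying Theorem \ref{thmbl2intro1largedata} to the extended data and restricting to the future domain of dependence of $B(0,1)$, which by finite speed of propagation coincides with the evolution of the original data, yields a classical solution on a uniform time interval.

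To pass from small to large data as in Theorem \ref{thm:bl2locintro1}, I would argue by scaling and a low-regularity covering. For each $p \in \Si$, the volume radius bound together with the $L^2$-bound on $\RRRic$ (and the $L^4$-bound on $\Th$ for points near $\pr \Si$) gives, via a Cheeger-Gromov compactness argument, harmonic-type coordinates on $B_g(p, r_0)$ in which $g$ is $C^0$-close to the Euclidean metric, with control uniform in $p$. Rescaling $g \mapsto r^{-2} g$ and $k \mapsto r^{-1} k$ for $r \ll r_0$, the scale-invariant quantities
\begin{align*}
\Vert \RRRic \Vert_{L^2}, \quad \Vert k \Vert_{L^4}, \quad \Vert \nab k \Vert_{L^2}, \quad \Vert \Th \Vert_{L^4(\pr \Si)}
\end{align*}
of the rescaled data on the unit ball tend to zero uniformly in $p$ as $r \to 0$, placing every such rescaled problem in the small-data regime. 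This fixes the radius $r$ in the statement; the small-data theorem then produces a solution ball-by-ball, and undoing the rescaling yields the time $T$ and constant $C$. Part (2) on higher regularity follows by propagating higher derivatives through the same extension and applying part (2) of Theorem \ref{thmbl2intro1largedata}.

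The main obstacle is the small-data localised step, specifically the constraint-respecting extension: one must produce a global maximal asymptotically flat $(\widetilde g,\widetilde k)$ on $\RRR^3$ that agrees with the given data on $B(0,1)$, solves the maximal constraint equations everywhere, and does not inflate $\Vert \RRRic \Vert_{L^2}$ or $\Vert \nab k \Vert_{L^2}$ beyond a universal multiple of their values on $B(0,1)$. The low regularity of $g$ precludes any naive cutoff-and-solve approach, and the surgery near $\pr B(0,1)$ is where the boundary $L^4$-control on $\Th$ enters. Once this extension is granted via \cite{Czimek1,Czimek21}, the remaining finite-speed-of-propagation, scaling, and covering arguments are essentially structural and do not touch the hard analysis of \cite{KRS}.
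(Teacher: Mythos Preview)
Your overall architecture matches the paper's exactly: small-data result on $B(0,1)$ via extension and \cite{KRS}, then a low-regularity covering of $\Si$ by coordinate charts, then rescaling to land in the small-data regime. Two points, however, are garbled and worth correcting.

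First, you conflate the roles of \cite{Czimek1} and \cite{Czimek21}. The extension of constraint data across $\pr B(0,1)$ is \cite{Czimek1} alone; it takes $(\bar g,\bar k)\in \HH^2\times\HH^1$ on $B(0,1)$ and produces asymptotically flat data on $\RRR^3$, with no reference to $\Th$ whatsoever. The paper \cite{Czimek21} is instead the existence of boundary harmonic coordinates on $(\Si,g)$, and \emph{this} is where $\Vert\Th\Vert_{L^4(\pr\Si)}$ enters: it controls the coordinate charts near $\pr\Si$. Your sentence ``the surgery near $\pr B(0,1)$ is where the boundary $L^4$-control on $\Th$ enters'' is therefore wrong; $\Th$ is the second fundamental form of $\pr\Si\subset\Si$, not of $\pr B(0,1)$ in a chart, and it plays no role in the extension step. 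Relatedly, near $\pr\Si$ the charts from \cite{Czimek21} are half-balls $B^+(0,r)$, and the paper inserts a smooth deformation $\Psi_\de:\ol{B(0,r_1)}\to B^+(0,2r_1)$ so that the small-data theorem on a \emph{full} ball still applies; you should flag this rather than treat all $p\in\Si$ uniformly.

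Second, the norms $\Vert\RRRic\Vert_{L^2}$, $\Vert k\Vert_{L^4}$, $\Vert\nab k\Vert_{L^2}$ are \emph{not} scale-invariant; under $(g,k)\mapsto(g(\la\cdot),\la k(\la\cdot))$ they pick up positive powers of $\la$ (e.g.\ $\Vert\nab k_{(\la)}\Vert_{L^2}=\la^{1/2}\Vert\nab k\Vert_{L^2}$), which is precisely why they become small. Calling them scale-invariant and then saying they tend to zero is self-contradictory; the paper exploits their \emph{subcritical} scaling.
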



\subsection{Overview of the proof of Theorem \ref{thm:bl2locintro1}} \label{subsec35353} 

In this section, we sketch the proof of Theorem \ref{thm:bl2locintro1} in three steps.

\begin{enumerate}

\item {\bf A localised bounded $L^2$-curvature theorem for small data on $B(0,1)$.} In Section \ref{L2onballssmall}, we prove that for sufficiently small initial data $(g,k)$ in $H^2 \times H^1$ on $B(0,1) \subset \RRR^3$, the future domain of dependence of $(B(0,1),g,k)$ is well-controlled up to time $T=1/2$, see the precise statement in Proposition \ref{prop:red}.


\ni The proof of Proposition \ref{prop:red} follows by an application of Theorem \ref{thmbl2intro1largedata} and the \emph{extension procedure for the constraint equations} \cite{Czimek1} as black boxes.

\item {\bf Construction of a cover of $\Si$ by coordinate systems $\ol{B(0,r_1)}$ and $B(0,r_2)$.} In Section \ref{subsec5893434}, we cover $\Si$ by coordinate systems $\ol{B(0,r_1)}$ and $B(0,r_2)$, where the radii $r_1,r_2>0$ are chosen small and depend only on low regularity bounds assumed in Theorem \ref{thm:bl2locintro1}.

\ni The cover is constructed such that its \emph{Lebesgue number}\footnote{Given a cover $(C_i)_{i \in I}$ of $\Si$, its Lebesgue number $\ell$ is defined as the largest number such that for each point $p \in \Si$, the geodesic ball $B_g(p,\ell)$ is completely contained in $C_i$ for some $i\in I$.} is bounded from below and depends only on low regularity bounds assumed in Theorem \ref{thm:bl2locintro1}. 

 \ni The construction uses the \emph{existence of boundary harmonic coordinates on manifolds with boundary} \cite{Czimek21} as black box. We remark that the coordinate systems $\ol{B(0,r_1)}$ cover an open neighbourhood of $\pr \Si$ in $\Si$, and the $B(0,r_2)$ cover the rest of $\Si$.

\item {\bf Scaling to small data.} In Section \ref{sec:ReductionToSmallBalls}, we use the scaling invariance of the Einstein equations to rescale for $\la>0$
\begin{align*} 
(B(0,r),g,k) \to (B(0,\la^{-1}r),g_{(\la)},k_{(\la)}).
\end{align*}
We show that for $\la>0$ sufficiently small, depending only on low regularity bounds assumed in Theorem \ref{thm:bl2locintro1}, the rescaled initial data is small in $H^2 \times H^1$, see Lemma \ref{lemmaScalingSmall}.
\end{enumerate}

The proof of Theorem \ref{thm:bl2locintro1} is then concluded in Section \ref{SectionConclusionOfTheorem} by combining the above three steps.


\subsection{Overview of the paper}
In Section \ref{sec:definitions}, we introduce notations and preliminaries. In Sections \ref{L2onballssmall}-\ref{sec:ReductionToSmallBalls}, we prove Steps (1)-(3) as outlined above. In Section \ref{SectionConclusionOfTheorem}, we conclude the proof of Theorem \ref{thm:bl2locintro1}.


\subsection{Acknowledgements} This work forms part of my Ph.D. thesis. I am grateful to my Ph.D. advisor J\'er\'emie Szeftel for his kind supervision and careful guidance. This work is financially supported by the RDM-IdF.


\section{Notations, definitions and prerequisites} \label{sec:definitions}
In this section, we introduce notations, definitions and preliminary results that are used in this paper.  \\


In this work, lowercase Latin indices run through $i,j=1,2,3$. Greek indices run through $\mu, \nu = 0,1,2, 3$. We write $ A \lesssim B$ if there exists a universal constant $C>0$ such that $A \leq C B$. \\

Let the closed upper half-space of $\RRR^3$ be denoted by
$$\HHH^+ := \Big\{ x \in \RRR^3 \Big\vert x^3 \geq 0 \Big\}.$$ 
For a point $x\in \HHH^+$ and a real number $r>0$, let
\begin{align*} 
B(x,r) := \Big\{ y \in \RRR^3 \Big\vert \vert x-y \vert < r \Big\}, \,\, B^+(x,r) := B(x,r) \cap \HHH^+.
\end{align*}  


\begin{definition}[Function spaces]
Let $m\geq1$ be an integer. Let $\Om \subset \RRR^3$ be an open subset, and let $f$ be a scalar function on $\Om$.
\begin{enumerate}
\item Let the norm
\begin{align*}
\Vert f \Vert^p_{H^m(\Om)} := \sum\limits_{\vert \a \vert\leq m} \Vert \pr^\a f \Vert^2_{L^2(\Om)},
\end{align*}
and define the function space $H^m(\Om)$ by
\begin{align*}
H^m(\Om) := \left\{ f \in L^2(\Om): \Vert f \Vert^2_{H^m(\Om)} < \infty \right\}.
\end{align*}
\item Let the norm
\begin{align*}
\Vert f \Vert_{C^m(\ol{\Om})} := \max\limits_{\vert \be \vert \leq m}  \, \sup\limits_{x\in \ol{\Om}} \left\vert \pr^\be f \right\vert.
\end{align*}
and define $C^m(\ol{\Om})$ to be the function space of $m$-times differentiable functions on $\Om$ equipped with norm $\Vert \cdot \Vert_{C^m(\ol{\Om})}$.
\end{enumerate}
Here $\a=(\a_1,\a_2, \a_3)$, $\be=(\be_1, \be_2, \be_3) \in \mathbb{N}^3$, and $\pr^{\a}:= \pr_1^{\a_1}\pr_{2}^{\a_2} \pr_3^{\a_3}$, $\vert \a \vert:= \sum_{i=1}^3 \vert \a_i \vert$. 
\end{definition}

\begin{definition}[Tensor spaces] Let $\Om \subset \RRR^3$ be an open subset and let $T$ be a tensor on $\Om$. For integers $m\geq1$ and reals $1<p<\infty$, define
\begin{align*}
\HH^{m}(\Om), \LL^p(\Om) \text{ and } \CC^{m}(\Om)
\end{align*}
to be the spaces of tensors whose coordinate components are respectively in 
\begin{align*}
H^{m}(\Om), L^p(\Om) \text{ and } C^{m}(\Om)
\end{align*}
equipped with the natural norm, that is, for example, for an $(l,m)$-tensor $T$ on $\Om$,
\begin{align*}
\Vert T \Vert_{\HH^m(\Om)} := \sum\limits_{i_1, \dots i_l=1}^3 \sum\limits_{j_1 \dots j_m=1}^3 \Vert T^{i_1 \dots i_{l}}_{\,\,\,\,\,\,\,\,\,\,\,\,\,\, j_1 \dots j_m} \Vert_{H^m(\Om)},
\end{align*}
where $T^{i_1 \dots i_{l}}_{\,\,\,\,\,\,\,\,\,\,\,\,\,\, j_1 \dots j_m}$ denote the coordinate components.
\end{definition}


\begin{definition}[Volume radius at scale $r$] \label{def:volumeradius}
Let $(M,g)$ be a Riemannian $3$-manifold with boundary. For a real $r>0$ and a point $p \in M$, the \emph{volume radius at scale $r$ at $p$} is defined as
\begin{align*}
r_{vol}(r,p) := \inf\limits_{r'< r} \frac{\mathrm{vol}_g \left( B_g(p,r') \right)}{\frac{4\pi}{3} (r')^{3}}.
\end{align*}
The \emph{volume radius of $(M,g)$ at scale $r$} is defined as
\begin{align*}
r_{vol}(M,r) := \inf\limits_{p \in M} r_{vol}(r,p).
\end{align*}
\end{definition}


\begin{definition}[Lebesgue number] \label{defLebesguenumber2435} Let $(M,g)$ be a Riemannian manifold. Given a covering $(C_i)_{i=1}^N$ of $M$, the Lebesgue number $\ell$ is defined to be the largest real number such that for each point $p \in M$, there is an $i\in \{1, \dots, N\}$ such that $B_g(p,\ell) \subset C_i$.
\end{definition}

\section{The localised bounded $L^2$-curvature theorem for small data on $B(0,1)$} \label{L2onballssmall}

In this section, we prove the following result.
\begin{proposition}[Localised bounded $L^2$ curvature theorem for small data on $B(0,1)$] \label{prop:red}
Let $(\bar{g}, \bar{k})$ be maximal initial data for the Einstein vacuum equations on $B(0,1) \subset \RRR^3$ and assume that for some $\varep>0$, 
\begin{align*} 
\Vert \bar{g}-e \Vert_{\HH^2(B(0,1))} + \Vert \bar{k} \Vert_{\HH^{1}(B(0,1))}  < \varep.
\end{align*}
Let $(\DD,{\bf g})$ be the solution to the Einstein vacuum equations in the future domain of dependence $\DD$ of $B(0,1)$, and let $t$ be a time function in $\DD$ such that its level sets $\Si_{t}$ are spacelike maximal hypersurfaces and foliate $\DD$ with $\Si_0 = B(0,1)$. Then, the following holds.
\begin{enumerate}
\item {\bf $L^2$-regularity.} There is an $\varep_0>0$ small such that if $\varep< \varep_0$, then the following control holds on $0\leq t \leq 1/2$,
\begin{align*} 
\Vert \RRRic \Vert_{L^\infty_{t}L^2(\Si_t)} \lesssim \varep,  \Vert k_t \Vert_{L^\infty_{t}L^4(\Si_t)}\lesssim \varep, \Vert \nab k_t \Vert_{L^\infty_{t}L^2(\Si_t)} \lesssim \varep, \inf\limits_{0\leq t \leq 1/2}  r_{vol}(\Si_t,1) \geq 1/8.
\end{align*}
\item {\bf Higher regularity.} Let $m\geq1$ be an integer. In case of higher regularity, we have the following higher regularity estimate on $0 \leq t \leq 1/2$,
\begin{align*}
\sum\limits_{\vert \a \vert \leq m} \Vert \D^{(\a)} \mathbf{R} \Vert_{L^\infty_{t}L^2(\Si_t)} \leq C_m \Big(  \Vert \bar{g}-e \Vert_{\HH^{m+2}(B(0,1))} + \Vert \bar{k} \Vert_{\HH^{m+1}(B(0,1))} \Big),
\end{align*}
where the constant $C_m>0$ depends on $m$.
\end{enumerate}
\end{proposition}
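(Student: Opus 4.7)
The strategy is to reduce Proposition \ref{prop:red} to the asymptotically flat setting of Theorem \ref{thmbl2intro1largedata} by extending the initial data across $\pr B(0,1)$, evolving globally on $\RRR^3$, and then restricting the conclusion back to $\DD$ via finite speed of propagation. The crucial black box is the extension procedure for the maximal constraint equations developed in \cite{Czimek1}: for $\varep$ small, it produces from $(\bar{g},\bar{k})$ an asymptotically flat pair $(g,k)$ on $\RRR^3$ satisfying (i) $(g,k)=(\bar{g},\bar{k})$ on $B(0,1)$, (ii) the maximal constraint equations on all of $\RRR^3$, and (iii) the smallness bound $\Vert \RRRic\Vert_{L^2(\RRR^3)}+\Vert\nab k\Vert_{L^2(\RRR^3)}\lesssim \varep$, together with the analogous higher-order estimate $\Vert \RRRic\Vert_{\HH^m(\RRR^3)}+\Vert \nab k\Vert_{\HH^m(\RRR^3)}\lesssim \Vert \bar{g}-e\Vert_{\HH^{m+2}(B(0,1))}+\Vert \bar{k}\Vert_{\HH^{m+1}(B(0,1))}$ in the higher-regularity case. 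I take this construction as given.

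Next, I apply Theorem \ref{thmbl2intro1largedata} to the globalised data $(\RRR^3,g,k)$. Sobolev embedding $\HH^2 \hookrightarrow C^0$ shows that $g$ is $C^0$-close to the Euclidean metric, so that $r_{vol}(\RRR^3,1)$ is close to $1$; together with the $O(\varep)$ bounds on Ricci and $\nab k$, this provides a small-data input to Theorem \ref{thmbl2intro1largedata}. For $\varep<\varep_0$ sufficiently small the existence time $T$ furnished by that theorem can be taken to satisfy $T\geq 1$ and the constant $C$ is of order $O(\varep)$, yielding a classical solution $(\widetilde{\MM},{\bf g})$ foliated by maximal hypersurfaces $\widetilde{\Si}_t\simeq \RRR^3$ on $[0,1]$ with $\Vert \mathbf{R}\Vert_{L^\infty_t L^2(\widetilde{\Si}_t)}+\Vert \nab k_t\Vert_{L^\infty_t L^2(\widetilde{\Si}_t)}\lesssim \varep$ and $r_{vol}(\widetilde{\Si}_t,1)\gtrsim 1$.

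To produce the local statement I invoke uniqueness of vacuum developments and finite speed of propagation: the restriction of $(\widetilde{\MM},{\bf g})$ to the future domain of dependence $\DD$ of $B(0,1)\subset \widetilde{\Si}_0$ is precisely the evolution of the original data $(\bar{g},\bar{k})$. Setting $\Si_t:=\widetilde{\Si}_t\cap \DD$ yields the required maximal foliation of $\DD$ with $\Si_0=B(0,1)$, and the $L^2$-bounds on $\mathbf{R}$ and $\nab k_t$ are inherited immediately. The $L^4$-bound on $k_t$ follows from the three-dimensional Sobolev embedding $H^1\hookrightarrow L^4$ combined with $H^1$-control of $k_t$, the $L^2$ piece of which comes from the asymptotic decay of $k_t$ on $\widetilde{\Si}_t$ inherited from the asymptotic flatness of the extension. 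The lower bound $r_{vol}(\Si_t,1)\geq 1/8$ follows from the $C^0$-closeness of $g_t$ to the Euclidean metric, via Sobolev embedding applied to the $\HH^2$-smallness propagated by the evolution; the value $1/8$ acts as a safety margin absorbing the boundary effect near $\pr \Si_t$. Higher regularity is analogous: the higher-order extension bound in (iii) is fed into part~(2) of Theorem \ref{thmbl2intro1largedata} to yield the claimed estimate.

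The only genuine difficulty is encapsulated in the black-box extension step (iii): constructing a global solution of the underdetermined nonlinear maximal constraint system that agrees with $(\bar{g},\bar{k})$ on $B(0,1)$ while remaining small in $\HH^2\times\HH^1$ is precisely the content of \cite{Czimek1}. With that tool in hand, the rest of the proof is a direct application of Theorem \ref{thmbl2intro1largedata} combined with standard Sobolev and finite-speed-of-propagation arguments.
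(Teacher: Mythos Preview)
Your proposal is correct and follows essentially the same route as the paper: extend $(\bar g,\bar k)$ to asymptotically flat maximal data on $\RRR^3$ via \cite{Czimek1}, apply the bounded $L^2$-curvature theorem globally, and restrict to $\DD$ by finite speed of propagation. The only cosmetic difference is that the paper invokes the \emph{small-data} version of \cite{KRS} (stated separately as Theorem~\ref{thmL2smalldata}) to get $T\geq 1$ and $O(\varep)$ constants directly, whereas you cite the large-data Theorem~\ref{thmbl2intro1largedata} and assert these small-data conclusions; this is not a gap, since the small-data statement is precisely the quantitative content you are using.
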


The proof of Proposition \ref{prop:red} is based on the following two literature results.

\begin{theorem}[An extension procedure for the constraint equations, \cite{Czimek1}] \label{thm:czextension}
Let $(\bar{g},\bar{k})$ be maximal initial data for the Einstein vacuum equations on $B(0,1) \subset \RRR^3$ and assume that for some $\varep>0$ it holds that
\begin{align*}
\Vert \bar{g}_{ij}-e_{ij} \Vert_{\HH^{2}(B(0,1))} +\Vert \bar{k}_{ij} \Vert_{\HH^{1}(B(0,1))} < \varep.
\end{align*}
Then, the following holds.
\begin{enumerate}
\item {\bf $L^2$-regularity.} There is a universal $\varep_0>0$ such that if $\varep< \varep_0$, then there exists asymptotically flat maximal initial data $(g,k)$ on $\RRR^3$ with $(g,k)\vert_{B(0,1)} = (\bar{g}, \bar{k})$ and
\begin{align*} 
\Vert g-e \Vert_{\HH^{2}_{-1/2}(\RRR^3)} + \Vert k \Vert_{\HH^{1}_{-3/2}(\RRR^3)} \lesssim \Vert \bar{g}-e \Vert_{\HH^{2}(B(0,1))} + \Vert \bar{k} \Vert_{\HH^{1}(B(0,1))}.
\end{align*}
\item {\bf Higher regularity.} In case of higher regularity, we have for integers $m\geq1$ the following higher regularity estimates,
\begin{align*}
\Vert g-e \Vert_{\HH^{m+2}_{-1/2}(\RRR^3)}+ \Vert k \Vert_{\HH^{m+1}_{-3/2}(\RRR^3)} \leq C_m \Big(  \Vert \bar{g}-e \Vert_{\HH^{m+2}(B(0,1))} + \Vert \bar{k} \Vert_{\HH^{m+1}(B(0,1))} \Big),
\end{align*}
where the constant $C_m>0$ depends on $m$.
\end{enumerate}
\end{theorem}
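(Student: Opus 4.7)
The plan is to prove this in two stages. First, a linear Sobolev extension yields a tensorial extension that coincides with $(\bar g, \bar k)$ on $B(0,1)$ and is Euclidean outside a slightly larger ball, but generically violates the Einstein constraints in a thin annular shell. Second, a non-linear correction supported away from $B(0,1)$ and decaying at infinity restores the constraints globally on $\RRR^3$; the smallness of the seed makes this correction step amenable to perturbation theory around the flat data $(e,0)$.

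\medskip

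In detail, I would first apply a bounded linear extension operator of Stein type to the components of $\bar g - e$ and $\bar k$, obtaining extensions in $\HH^2(\RRR^3)$ and $\HH^1(\RRR^3)$ with norms controlled by the input. Multiplying by a smooth cutoff $\chi$ equal to $1$ on $B(0,1)$ and vanishing outside $B(0,2)$ produces $(\tilde g, \tilde k)$ of size $O(\varep)$ in $\HH^2_{-1/2}(\RRR^3) \times \HH^1_{-3/2}(\RRR^3)$ whose constraint defects
\begin{align*}
F(\tilde g, \tilde k) := \bigl( \mathrm{R}_{\tilde g} - |\tilde k|^2_{\tilde g},\ \Div_{\tilde g} \tilde k,\ \tr_{\tilde g} \tilde k \bigr)
\end{align*}
are supported in the annulus $B(0,2)\setminus B(0,1)$ and small by Sobolev multiplication. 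The corrected data is then sought as $(g,k) = (\tilde g + h, \tilde k + \pi)$, with $(h,\pi)$ required to vanish on $B(0,1)$, to decay at infinity in the weighted spaces, and to solve the non-linear system $F(\tilde g + h, \tilde k + \pi) = 0$. Following the underdetermined approach of Corvino-Schoen and Chru\'sciel-Delay, the linearization at $(e,0)$ is an underdetermined elliptic operator $\mathcal{L}$ whose formal adjoint has a finite-dimensional kernel of Killing initial data; one constructs a bounded right-inverse for $\mathcal{L}$ on weighted Sobolev spaces adapted to the indices $-1/2$ and $-3/2$, absorbing the kernel obstructions by tuning the asymptotic ADM charges of the extended data (for instance by gluing to a small Schwarzschild member at infinity). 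A contraction-mapping or Newton iteration then yields the non-linear correction in the small-data regime and produces the asserted $L^2$-regularity bound.

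\medskip

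The main obstacle is the linear theory for $\mathcal{L}$: one must construct the right-inverse on weighted Sobolev spaces that simultaneously enforce the inner vanishing on $B(0,1)$, the decay at infinity, and quantitative estimates at the borderline regularity $H^2 \times H^1$, which is below the threshold at which Corvino-Schoen-style constructions are classically formulated. The inner vanishing condition looks overdetermined, since it prescribes both Dirichlet and Neumann data for $(h,\pi)$ on $\partial B(0,1)$, and is made possible only by exploiting the underdetermined $12$-unknowns-vs-$4$-equations structure of $\mathcal{L}$ acting on $(h,\pi)$, combined with the freedom in the conformal or TT-decomposition. Once the linear theory is in place and the non-linear iteration closes, the higher regularity statement in part $(2)$ follows by a standard elliptic bootstrap: differentiating the constraint system in maximal gauge and reapplying the weighted elliptic estimates propagates each additional derivative of the seed $(\bar g, \bar k)$ into the extension with the claimed bounds.
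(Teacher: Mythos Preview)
The paper does not prove this theorem at all: it is quoted verbatim from \cite{Czimek1} and used as a black box (the introduction explicitly says so). There is therefore no ``paper's own proof'' to compare your proposal against.

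That said, your sketch is a reasonable outline of one possible route to such an extension result, but it is not the route actually taken in \cite{Czimek1}. You propose a Corvino--Schoen / Chru\'sciel--Delay underdetermined-elliptic correction on an exterior region, absorbing cokernel obstructions into ADM charges. The proof in \cite{Czimek1} instead goes through the conformal method: after a Sobolev extension and cutoff, one writes $g = \phi^4 \tilde g$ and $k$ via a York-type decomposition, reduces to solving the Lichnerowicz equation for $\phi$ together with a divergence equation for the momentum, and closes via weighted elliptic estimates and an implicit function theorem around the flat solution. The conformal route avoids the cokernel/KID analysis entirely and is better adapted to the borderline regularity $H^2 \times H^1$, where the Corvino--Schoen machinery (which in its standard form needs $C^{k,\alpha}$ or high Sobolev index and exponentially weighted cutoffs) is not available off the shelf.

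Your own proposal correctly flags this as ``the main obstacle,'' but then leaves it unresolved: you do not indicate how to build a right-inverse for $\mathcal{L}$ on $\HH^2_{-1/2} \times \HH^1_{-3/2}$ that simultaneously enforces vanishing on $B(0,1)$ and the correct decay, nor how the exponential weights intrinsic to compactly supported gluing interact with the polynomial weights $-1/2,-3/2$ at infinity. Without that linear theory, the contraction-mapping step has no input to run on. So as written the proposal names the right difficulty but does not overcome it; the actual proof circumvents it by switching to the conformal formulation.
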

Here $\HH^{m+2}_{-1/2}(\RRR^3)$ and $\HH^{m+1}_{-3/2}(\RRR^3)$ denote Sobolev spaces of tensors equipped with weights $-1/2$ and $-3/2$, respectively, corresponding to the asymptotic flatness of the initial data, see \cite{Czimek1} for details.

\begin{theorem}[The bounded $L^2$-curvature theorem for small data, \cite{KRS}] \label{thmL2smalldata}
Let $(\mathcal{M}, {\bf g})$ an asymptotically flat solution to the Einstein vacuum equations together with a maximal foliation by space-like hypersurfaces $\Si_t$ defined as level hypersurfaces of a time function $t$. Assume that the initial slice $(\Si_0, g,k)$ is such that $\Si_0 \simeq \RRR^3$ and 
\begin{align*}
\Vert \RRRic \Vert_{L^2(\Si_0)} \leq \varep, \, \Vert \nab k \Vert_{L^2(\Si_0)} \leq \varep \text{ and } r_{vol}(\Si_0,1) \geq \half.
\end{align*}
Then, 
\begin{enumerate}
\item {\bf $L^2$-regularity.} There exists a small universal constant $\varep_0>0$ such that if \newline $0< \varep < \varep_0$, then the following control holds on $0 \leq t \leq 1$,
\begin{align*}
\Vert {\bf R} \Vert_{L^\infty_{t}L^2(\Si_t)} \lesssim \varep,\, \Vert k \Vert_{L^\infty_t L^2(\Si_t)} \lesssim \varep, \, \Vert \nab k \Vert_{L^\infty_{t}L^2(\Si_t)} \lesssim \varep, \, \inf\limits_{0\leq t \leq 1}  r_{vol}(\Si_t,1) \geq \frac{1}{4}.
\end{align*}
\item {\bf Higher regularity.} In case of higher regularity, we have for integers $m\geq1$ the following higher regularity estimates on $0\leq t \leq 1$,
\begin{align*}
\sum\limits_{\vert \a \vert \leq m} \Vert \D^{(\a)} \mathbf{R} \Vert_{L^\infty_{t}L^2(\Si_t)} \lesssim \sum\limits_{\vert i \vert \leq m} \Big( \Vert \nab^{(i)} \RRRic \Vert_{L^2(\Si_0)} + \Vert \nab^{(i)} \nab k \Vert_{L^2(\Si_0)} \Big).
\end{align*}
\end{enumerate}
\end{theorem}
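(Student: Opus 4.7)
The plan is to deduce Proposition \ref{prop:red} from Theorem \ref{thmL2smalldata} (small-data bounded $L^2$-curvature on $\RRR^3$) by using Theorem \ref{thm:czextension} to extend $(\bar g, \bar k)$ from $B(0,1)$ to all of $\RRR^3$, solving the Cauchy problem globally on the extension via the small-data black box, and then restricting the development to the future domain of dependence $\DD$ of $B(0,1)$. Finite speed of propagation and uniqueness of the Cauchy problem in domains of dependence guarantee that the restriction is independent of the choice of extension and realises the solution sought by the proposition.

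Concretely, the hypothesis $\Vert \bar g - e \Vert_{\HH^2(B(0,1))} + \Vert \bar k \Vert_{\HH^1(B(0,1))} < \varepsilon$ combined with Theorem \ref{thm:czextension} provides maximal, asymptotically flat data $(g,k)$ on $\RRR^3$ agreeing with $(\bar g, \bar k)$ on $B(0,1)$ and satisfying $\Vert g-e\Vert_{\HH^2_{-1/2}(\RRR^3)} + \Vert k \Vert_{\HH^1_{-3/2}(\RRR^3)} \lesssim \varepsilon$. For $\varepsilon$ small, the weighted Sobolev embedding yields $\Vert g-e\Vert_{C^0(\RRR^3)} \lesssim \varepsilon$, hence $r_{vol}(\RRR^3,1) \geq 1/2$; together with the trivial bounds $\Vert \RRRic \Vert_{L^2(\RRR^3)} + \Vert \nab k \Vert_{L^2(\RRR^3)} \lesssim \varepsilon$, the hypotheses of Theorem \ref{thmL2smalldata} are met. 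That theorem produces a maximal foliation $(\widetilde \Si_t)_{0\leq t\leq 1}$ of a Cauchy development $(\widetilde \MM, \mathbf{g})$ with
\begin{align*}
\Vert \mathbf{R} \Vert_{L^\infty_t L^2(\widetilde\Si_t)} + \Vert k_t \Vert_{L^\infty_t L^2(\widetilde\Si_t)} + \Vert \nab k_t \Vert_{L^\infty_t L^2(\widetilde\Si_t)} \lesssim \varepsilon, \qquad \inf_{0 \leq t \leq 1} r_{vol}(\widetilde\Si_t, 1) \geq \tfrac{1}{4}.
\end{align*}
Setting $\Si_t := \widetilde\Si_t \cap \DD$, with $\DD \subset \widetilde\MM$ the future domain of dependence of $B(0,1) \subset \widetilde\Si_0$, finite speed of propagation ensures that $\DD$ and $\mathbf{g}|_{\DD}$ depend only on $(\bar g, \bar k)$, while $(\Si_t)$ is a foliation of $\DD$ by spacelike maximal hypersurfaces with $\Si_0 = B(0,1)$. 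All $L^\infty_t L^2(\Si_t)$ bounds claimed in the proposition are inherited by restriction from the corresponding bounds on $\widetilde\Si_t$. The $L^4$ control on $k_t$, which is not stated directly by Theorem \ref{thmL2smalldata}, follows from $\Vert k_t \Vert_{L^2(\widetilde\Si_t)} + \Vert \nab k_t \Vert_{L^2(\widetilde\Si_t)} \lesssim \varepsilon$ combined with the Sobolev embedding $H^1(\widetilde\Si_t) \hookrightarrow L^6(\widetilde\Si_t)$ (which holds uniformly in $t$ because $\widetilde\Si_t$ remains uniformly close to Euclidean) and interpolation with $L^2$, followed by restriction to $\Si_t$.

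The main obstacle is the volume radius lower bound $r_{vol}(\Si_t, 1) \geq 1/8$: since $\Si_t$ has a boundary coming from $\pr \DD$, a geodesic ball $B_g(p,r') \subset \widetilde\Si_t$ may be cut off by $\pr \Si_t$, so the bound on $\widetilde\Si_t$ does not transfer verbatim. The restriction $t \leq 1/2$ is precisely what makes this tractable: the smallness of curvature forces $\mathbf{g}$ to remain quantitatively close to Minkowski on $\DD$ (a byproduct of the proof of Theorem \ref{thmL2smalldata}), so $\pr \DD$ stays uniformly close to the Minkowski null cone emanating from $\pr B(0,1)$ and $\Si_t$ stays close to the Euclidean ball of radius $1-t \geq 1/2$. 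This is enough to preserve a fixed positive fraction of each geodesic ball $B_g(p,r') \cap \Si_t$ as $p$ approaches $\pr \Si_t$, yielding $r_{vol}(\Si_t, 1) \geq 1/8$ after possibly shrinking $\varepsilon_0$. The higher-regularity assertion is handled by the same chain: the higher-regularity part of Theorem \ref{thm:czextension} propagates the extension bounds at every order $m$, the higher-regularity part of Theorem \ref{thmL2smalldata} produces the corresponding estimates on $\widetilde\Si_t$, and restriction to $\Si_t$ delivers the claimed bounds with constant $C_m$ depending only on $m$.
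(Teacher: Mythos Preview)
You have not written a proof of Theorem \ref{thmL2smalldata}. That theorem is a black-box result cited from \cite{KRS}; the paper does not prove it, and indeed its proof spans the entirety of \cite{KRS} together with the parametrix constructions \cite{J1}--\cite{J5}. What you have written is a proof of Proposition \ref{prop:red}, which \emph{uses} Theorem \ref{thmL2smalldata} as an input.

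That said, your proof of Proposition \ref{prop:red} is correct and follows the paper's argument essentially verbatim: extend $(\bar g,\bar k)$ to $\RRR^3$ via Theorem \ref{thm:czextension}, verify the smallness hypotheses of Theorem \ref{thmL2smalldata}, apply it to obtain the foliation and the $L^\infty_t L^2$ bounds on $0\leq t\leq 1$, and then restrict to the future domain of dependence $\DD$ of $B(0,1)$ for $0\leq t\leq 1/2$. The paper handles the volume-radius lower bound on $\Si_t\cap\DD$ in the same way you suggest, namely by invoking the $C^0$ control of $\mathbf{g}_{\mu\nu}$ that is a byproduct of the proof of Theorem \ref{thmL2smalldata} (with a reference to Section~4 of \cite{J3}); your explanation of why the cut-off at $\pr\DD$ costs at most a factor of two is slightly more detailed than the paper's one-line remark but captures the same mechanism. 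The higher-regularity chain is identical.
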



For the rest of this section, we prove Proposition \ref{prop:red}. For $\varep>0$ sufficiently small, the initial data $(\bar{g},\bar{k})$ on $B(0,1)$ can be extended by Theorem \ref{thm:czextension} to an asymptotically flat, maximal initial data set $(g,k)$ on $\RRR^3$ such that $(g,k)\vert_{B_1} = (\bar{g}, \bar{k})$ and
\begin{align*}
\Vert g-e \Vert_{\HH^2_{-1/2}(\RRR^3)} + \Vert k \Vert_{\HH^1_{-3/2}(\RRR^3)} \lesssim \varep.
\end{align*}
In particular, for $\varep>0$ sufficiently small, the extension $(g,k)$ on $\RRR^3$ satisfies the assertions of Theorem \ref{thmL2smalldata}.\\

Let therefore $(\mathcal{M}, {\bf g})$ denote the future development of $(\RRR^3,g,k)$ and $t$ be the time function in $\mathcal{M}$ such that its level sets $\Sigma_t$ are spacelike maximal hypersurfaces with $\Si_0 = \RRR^3$. By Theorem \ref{thmL2smalldata}, we have on $0\leq t \leq 1$,
 \begin{align*} 
 \Vert \RRRic_t \Vert_{L^{\infty}_t L^2(\Si_t)} \lesssim \varep, \Vert k_t \Vert_{L^\infty_t L^2(\Si_t)} \lesssim \varep, \Vert \nab k_t \Vert_{L^{\infty}_t L^2( \Si_t)} \lesssim \varep,\inf\limits_{0\leq t \leq 1}  r_{vol}(\Si_t,1) \geq 1/4,
\end{align*}
where $\RRRic_t$ denotes the Ricci curvature of the induced metric $g_t$ and $k_t$ the second fundamental form of $\Si_t$.\\

By restricting to the domain of dependence $\DD$ of $B(0,1)$, it follows that for $0\leq t \leq 1/2$,
\begin{align*}
&\Vert \RRRic_t \Vert_{L^{\infty}_tL^2(\Si_t \cap \DD)} \lesssim \varep, \Vert k_t \Vert_{L^{\infty}_t L^4( \Si_t \cap \DD)} \lesssim \varep, \Vert \nab k_t \Vert_{L^{\infty}_t L^2( \Si_t \cap \DD)} \lesssim \varep, \\
&\text{ and } \inf\limits_{0\leq t \leq 1/2}  r_{vol}(\Si_t \cap \DD,1) \geq 1/8.
\end{align*}
We remark that the control of the volume radius follows as in the proof of Theorem \ref{thmL2smalldata} by a control of $\bf{g}_{\mu \nu}$ in $C^0$; for details we refer the reader to the estimates in Section 4 of \cite{J3}.\\

It remains to prove the higher regularity estimate of Proposition \ref{prop:red}. By the higher regularity estimates of Theorems \ref{thm:czextension} and \ref{thmL2smalldata}, we have, for integers $m\geq1$, on $0\leq t \leq 1/2$,
\begin{align*}
\sum\limits_{\vert \a \vert \leq m} \Vert \D^{(\a)} \mathbf{R} \Vert_{L^\infty_t L^2(\Si_t)}\lesssim& \sum\limits_{\vert i \vert \leq m}\Big( \Vert \nab^{(i)} \RRRic \Vert_{L^2(\RRR^3)} + \Vert \nab^{(i)} \nab k \Vert_{L^2(\RRR^3)} \Big)\\
\lesssim& \Vert g-e \Vert_{\HH^{m+2}_{-1/2}(\RRR^3)} +\Vert k \Vert_{\HH^{m+1}_{-3/2}(\RRR^3)} \\
\leq& C_m  \Big( \Vert \bar{g}-e \Vert_{\HH^{m+2}(B(0,1))} +\Vert \bar{k} \Vert_{\HH^{m+1}(B(0,1))} \Big),
\end{align*}
where the constant $C_m>0$ depends on $m$. Restriction to the future domain of dependence $\DD$ of $B(0,1)$ then proves the higher regularity estimates of Proposition \ref{prop:red}. This finishes the proof of Proposition \ref{prop:red}.


\section{Construction of the cover of $\Si$ by coordinate systems} \label{subsec5893434} 

In this section, we cover of $\Si$ by coordinate systems $\ol{B(0,r_1)}$ and $B(0,r_2)$, where the radii $r_1,r_2>0$ are small, depending only on the low regularity geometric bounds assumed in Theorem \ref{thm:bl2locintro1}. In particular, the radii $r_1$ and $r_2$ are sufficiently small such that subsequent rescaling to the unit ball leads to small data, see Sections \ref{sec:ReductionToSmallBalls} and \ref{SectionConclusionOfTheorem}.\\

At first, we construct coordinate systems $\ol{B(0,r_1)}$ near the boundary $\pr \Si$ of $\Si$. Then, we cover the rest of $\Si$ by coordinate systems $B(0,r_2)$. Finally, we prove that the Lebesgue constant $\ell$ of the constructed cover of $\Si$ is bounded from below.\\

The construction is based on the following existence result from \cite{Czimek21}.
\begin{theorem}[Existence of regular coordinate systems] \label{CGexistencetheorem}
Let $(M,g)$ be a smooth Riemannian $3$-manifold with boundary such that
\begin{align*}
\Vert \RRRic \Vert_{L^2(M)} < \infty, \Vert \Theta \Vert_{L^4(\pr M)} < \infty, r_{vol}(M,1)>0.
\end{align*}
Then, the following holds.
\begin{enumerate}
\item {\bf $L^2$-regularity.} There is $\varep_0>0$ such that for all $0< \varep< \varep_0$, there exists a radius
\begin{align*}
r=r(\Vert \RRRic \Vert_{L^2(M)}, \Vert \Theta \Vert_{L^4(\pr M)}, r_{vol}(M,1), \varep) >0
\end{align*}
such that for every $p \in \Si$, there is a chart $\varphi: B^+(x,r) \to U \subset M$ with $\varphi(x)=p$ such that
\begin{align} \label{Linftyboundsg2222}
(1-\varep) e_{ij} \leq g_{ij} \leq (1+ \varep) e_{ij}
\end{align}
and
\begin{align*} 
r^{-1/2} \Vert \pr g \Vert_{\LL^{2}(B^+(x,r))} + r^{1/2} \Vert \pr^2 g \Vert_{\LL^2(B^+(x,r))} \leq \varep.
\end{align*}
\item {\bf Higher regularity.} In case of higher regularity, we have for integers $m\geq1$,
\begin{align*} 
\Vert g \Vert_{\HH^{m+2}(B^+(x,r))} \leq C_r \sum\limits_{i=0}^m \Vert \nab^{(m)} \RRRic \Vert_{L^2(M)} + C_{r,m} \varep.
\end{align*}
\end{enumerate}

\end{theorem}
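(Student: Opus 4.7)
The plan is to split $M$ into an interior region and a boundary collar based on the distance $d(p) := d_g(p, \partial M)$ and to construct harmonic coordinate charts in each regime. The output radius $r$ will be chosen small enough in terms of $\Vert \RRRic \Vert_{L^2(M)}$, $\Vert \Theta \Vert_{L^4(\partial M)}$, $r_{\mathrm{vol}}(M,1)$, and $\varepsilon$ so that after the change of variable $y \mapsto r y$ the rescaled metric on a unit half-ball is a small perturbation of the Euclidean metric. This rescaling is the central mechanism: in three dimensions $\Vert \RRRic \Vert_{L^2}$ rescales like $r^{1/2}$ and $\Vert \Theta \Vert_{L^4(\partial)}$ likewise rescales like $r^{1/2}$, so all the hypotheses contribute smallness as $r \to 0$.

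For interior base points $p$ with $d(p) \geq r$, I would apply the classical Jost-Karcher/Anderson harmonic-coordinate construction. On $B_g(p, 2r)$, solve the Dirichlet problem $\Delta_g \phi^i = 0$ for $i=1,2,3$ with boundary data equal to geodesic normal coordinates centered at $p$. The lower bound on the volume radius plus the smallness of $r^{1/2}\Vert \RRRic \Vert_{L^2}$ imply, via a fixed-point argument, that $\phi = (\phi^1, \phi^2, \phi^3)$ is a diffeomorphism onto $B(x,r)$, placed inside $B^+(x,r)$ by choosing $x$ away from $\{x^3 = 0\}$. In these coordinates the harmonic gauge yields
\begin{equation*}
g^{ab}\partial_a \partial_b g_{ij} = -2\,\RRRic_{ij} + Q_{ij}(g, \partial g),
\end{equation*}
with $Q$ quadratic in $\partial g$. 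Standard elliptic regularity for this system, combined with Sobolev embedding $H^1 \hookrightarrow L^6$ in three dimensions, produces both the pointwise bound $(1-\varepsilon)e_{ij} \leq g_{ij} \leq (1+\varepsilon)e_{ij}$ and the scaled $L^2$ estimates on $\partial g$ and $\partial^2 g$ asserted in the theorem.

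The boundary case, when $d(p) < r$, is the delicate part. Here I would first construct intrinsic boundary harmonic coordinates $(x^1, x^2)$ on $\partial M \cap B_g(p, 2r)$; the $L^4$ bound on $\Theta$ combined with the Gauss equation gives $L^2$ control of the intrinsic Gauss curvature of $\partial M$, and two-dimensional uniformization then yields the coordinates with the required regularity. I would extend them to $M$ by $\Delta_g x^i = 0$ for $i=1,2$ with matching Dirichlet data on $\partial M$, and define a third coordinate $x^3$ as the $g$-harmonic function with $x^3|_{\partial M} = 0$ and $\partial_\nu x^3|_{\partial M} = 1$. This straightens $\partial M$ to $\{x^3 = 0\}$ and produces a diffeomorphism $\varphi : B^+(x,r) \to U$ with $\varphi(x) = p$, $x \in \{x^3 = 0\}$. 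The same harmonic-gauge elliptic system holds, now with boundary contributions involving $\Theta$; these are absorbed via the trace inequality $\Vert \, \cdot \, \Vert_{L^4(\partial M)} \lesssim \Vert \, \cdot \, \Vert_{H^1(M)}$, which is sharp in three dimensions.

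The hard part will be closing the boundary elliptic estimate up to $\partial M$ at this low regularity: the nonlinearity $Q(g,\partial g)$ is borderline-critical in three dimensions, $\Theta$ is only in $L^4$ rather than $L^\infty$, and one must simultaneously maintain the $C^0$-diffeomorphism character of $\varphi$ and its $H^2$ control. The resolution is to run the whole argument on the rescaled unit half-ball, where smallness of the renormalized data permits a standard contraction/continuity argument in weighted Sobolev spaces. Once the base case is secured, the higher-regularity assertion (2) follows by induction: differentiating the harmonic-gauge equation and applying boundary elliptic regularity together with the higher trace theorem for $\nabla^{(i)}\Theta$ upgrades $g$ to $\HH^{m+2}(B^+(x,r))$ with $m$-dependent constants $C_r$ and $C_{r,m}$ absorbing the lower-order norms, precisely as stated.
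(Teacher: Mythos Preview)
This theorem is not proved in the present paper. It is quoted verbatim as a black-box result from \cite{Czimek21} (see the sentence introducing it: ``The construction is based on the following existence result from \cite{Czimek21}''), so there is no proof in the paper to compare your proposal against.

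That said, your outline is broadly consonant with what the cited reference does, as its title (``The existence of boundary harmonic coordinates on manifolds with boundary in low regularity'') indicates: one constructs harmonic coordinates adapted to the boundary, splits into interior and boundary regimes, and exploits the scaling $r^{1/2}\Vert \RRRic \Vert_{L^2}$ and $r^{1/2}\Vert \Theta \Vert_{L^4(\partial M)}$ to gain smallness. Your sketch correctly identifies the critical difficulty---closing the boundary elliptic estimate at the borderline regularity where $\Theta \in L^4$ and the nonlinearity $Q(g,\partial g)$ is scale-invariant---and the resolution via rescaling to the unit half-ball. Whether the specific choice of boundary gauge you describe (intrinsic 2d harmonic coordinates plus a harmonic defining function for $x^3$) matches the one in \cite{Czimek21} cannot be verified from this paper alone; the 45-page length of that reference suggests the boundary construction and the associated a priori estimates are substantially more involved than a paragraph sketch can capture.
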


\begin{remark} \label{RemarkComparison} The bound \eqref{Linftyboundsg2222} allows to compare geodesic length on $M$ with coordinate length in the chart $\varphi$. In particular, it holds that for $r'<r$ and $\varep>0$ small,
\begin{align*}
B_g(p,(1-\varep) r') &\subset \varphi(B^+(x,r')) \subset B_g(p,(1+\varep) r'), \\
\varphi(B^+(x,(1-\varep) r')) &\subset B_g(p,r') \subset \varphi(B^+(x,(1+\varep) r')).
\end{align*}
\end{remark}

{\bf Construction of coordinate systems $\ol{B(0,r_1)}$ near $\pr \Si$.} First, for a given point $p \in \pr \Si$, we construct a coordinate system $\ol{B(0,r_1)}$ in $\Si$ containing $p$. Then we pick points $(p_i)_{i=1}^{N_1} \subset \pr \Si$ such that the corresponding constructed coordinate systems $\ol{B(0,r_1)}$ cover an open neighbourhood of $\pr \Si$ in $\Si$. \\

Let thus $p\in \pr \Si$, and let $\varep>0$ small to be determined. By Theorem \ref{CGexistencetheorem}, there is a radius
$$r=r(\Vert \RRRic \Vert_{L^2(M)}, \Vert \Theta \Vert_{L^4(\pr M)}, r_{vol}(M,1), \varep)>0$$
 and a chart $\varphi: B^+(0,r) \to U \subset \Si$ with $\varphi(0)=p$ and 
\begin{align} \begin{aligned} 
&(1-\varep) e_{ij} \leq g_{ij} \leq (1+ \varep) e_{ij}, \\
&r^{-1/2} \Vert \pr g \Vert_{\LL^{2}(B^+(x,r))} + r^{1/2} \Vert \pr^2 g \Vert_{\LL^2(B^+(x,r))} \leq \varep,
\end{aligned} \label{Linftyboundsg2222333} \end{align}
and in case of higher regularity, 
\begin{align} \label{higherregnormboundsg22223}
\Vert g \Vert_{\HH^{m+2}(B^+(x,r))} \leq C_r \sum\limits_{i=0}^m \Vert \nab^{(m)} \RRRic \Vert_{L^2(M)} + C_{r,m} \varep.
\end{align}

We define the radius $r_1>0$ by
\begin{align*}
2r_1=\min \left\{ r, \frac{\varep^4}{\Vert k \Vert_{L^4(M)}^4 + \Vert \nab k \Vert_{L^2(M)}^2} \right\},
\end{align*}
and let the chart 
\begin{align*} 
\varphi_p: B^+(0,2r_1) \to U_p \subset \Si
\end{align*}
be defined as the restriction of $\varphi$ to $B^+(0,2r_1)$.\\

The following technical lemma is used to put a coordinate system $\ol{B(0,r_1)}$ into $B^+(0,2r_1)$ such that in addition it covers an open neighbourhood of the origin in $\{ x^3=0\}$.
\begin{lemma} \label{lem:technicalpde} There is $\de_0>0$ such that for all reals $0<\de< \de_0$, there is a smooth diffeomorphism $$\Psi_\de: \ol{B(0,r_1)} \to P_\de \subset B^+(0,2r_1)$$ such that
\begin{align} \begin{aligned}
B^+(0,r_1\de) \subset \subset & P_\de,\\
\ubp(0,r_1\de) \subset \subset& P_\de,
\end{aligned}   \label{volumeboundbelwo55343} \end{align}
and for every integer $m \geq0$,
\begin{align} \label{eq3434320004}
\Vert D\Psi_\de - I \Vert_{\CC^m(\ol{B(0,r_1)})} + \Vert D(\Psi_\de)^{-1} - I \Vert_{\CC^m(\ol{P_\de})}  \leq C_{r_1,m} \de,
\end{align}
where $I$ denotes the identity matrix.
\end{lemma}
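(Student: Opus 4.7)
The plan is to construct $\Psi_\de$ as an explicit smooth perturbation of the translation $\Psi_0(x) := x + (0, 0, r_1)$, which already sends $\ol{B(0, r_1)}$ diffeomorphically onto the ball $\ol{B((0, 0, r_1), r_1)} \subset \ol{B^+(0, 2r_1)}$, tangent to the plane $\{x^3 = 0\}$ at the origin. The perturbation should be chosen so that the image $P_\de$ develops a small flat piece of horizontal size $\sim r_1 \de$ along $\{y^3 = 0\}$, thereby compactly containing both $B^+(0, r_1 \de)$ and $\ubp(0, r_1 \de)$ as required in \eqref{volumeboundbelwo55343}.

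Concretely, I would write $\Psi_\de(x) = \Psi_0(x) + \de \, V_\de(x)$ for a smooth vector field $V_\de$ supported in a neighbourhood of the south pole $(0, 0, -r_1)$ of $\ol{B(0, r_1)}$, defined via a cutoff-based explicit formula (or equivalently by solving a small elliptic PDE, as suggested by the name of the lemma). For $\de < \de_0$ sufficiently small, $\Psi_\de$ is then a smooth diffeomorphism onto its image by the inverse function theorem, since $D\Psi_\de = I + \de \, DV_\de$ is uniformly invertible; the containment $P_\de \subset B^+(0, 2r_1)$ follows from the explicit formula and the containment $\Psi_0(\ol{B(0,r_1)}) \subset \ol{B^+(0, 2r_1)}$; and the two inclusions in \eqref{volumeboundbelwo55343} follow from the designed flattening combined with the smallness of the sets $B^+(0, r_1 \de)$ and $\ubp(0, r_1 \de)$. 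The derivative estimate \eqref{eq3434320004} for $\Psi_\de$ then follows directly from the linear $\de$-scaling of $V_\de$, and the corresponding estimate for $(\Psi_\de)^{-1}$ is obtained from the identity $D(\Psi_\de)^{-1} = (D\Psi_\de)^{-1} \circ \Psi_\de^{-1}$ and a Neumann series expansion of $(I + \de\, DV_\de)^{-1}$.

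The main obstacle I would expect is the scale matching inherent in the construction of $V_\de$. A naive perturbation with vertical displacement of size $\sim r_1 \de$ spread over a horizontal region of the same size $\sim r_1 \de$ would yield first derivatives of order $1$, violating \eqref{eq3434320004}. The construction must therefore exploit the quadratic flatness of the sphere $\pr B(0, r_1)$ near its south pole, which deviates from its tangent plane $\{x^3 = -r_1\}$ by only $\frac{(x^1)^2 + (x^2)^2}{2 r_1} + O\bigl((x^1)^4 + (x^2)^4\bigr)$. Consequently, vertical displacements of size only $\sim r_1 \de^2$ suffice to flatten a spherical cap of horizontal radius $\sim r_1 \de$ onto the plane $\{y^3 = 0\}$, which keeps $\|DV_\de\|_{\CC^m(\ol{B(0,r_1)})}$ bounded independently of $\de$ and hence yields $\|D\Psi_\de - I\|_{\CC^m} = O(\de)$ as required.
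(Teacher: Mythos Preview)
Your approach is precisely the one the paper indicates: start from the translation $\Psi_0$ onto $B((0,0,r_1),r_1)\subset B^+(0,2r_1)$ and smoothly deform. The paper's proof is a single sentence plus a figure with ``details left to the reader'', so you have already supplied more than the original. Your observation that the quadratic tangency of the sphere to $\{y^3=0\}$ reduces the needed vertical displacement to $O(r_1\de^2)$ is exactly what makes the $m=0$ estimate in \eqref{eq3434320004} go through.

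There is, however, a genuine gap in your treatment of the higher-order estimates. You claim that a displacement of size $r_1\de^2$ over a horizontal region of size $r_1\de$ keeps $\|DV_\de\|_{\CC^m}$ bounded uniformly in $\de$. But a bump of amplitude $A$ localised to scale $L$ has $k$-th derivative of order $A L^{-k}$; with $A\sim r_1\de$ (your $V_\de$, after dividing the displacement by $\de$) and $L\sim r_1\de$, one gets $\|D^{k}V_\de\|\sim (r_1\de)^{1-k}$, so $\|D\Psi_\de-I\|_{\CC^m}\sim \de\cdot (r_1\de)^{-m}$, which blows up for $m\ge 1$. This is not an artifact of your particular $V_\de$: if $\|D\Psi_\de-I\|_{\CC^1}\le C_{r_1,1}\de$, then the second fundamental form of $\pr P_\de=\Psi_\de(\pr B(0,r_1))$ is pointwise $O(\de)$-close to that of the round $r_1$-sphere, namely $1/r_1$, and hence cannot vanish on the flat disc $\ubp(0,r_1\de)\subset \pr P_\de$ once $\de$ is small. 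In other words, \eqref{eq3434320004} for $m\ge 1$ with a $\de$-independent constant is incompatible with \eqref{volumeboundbelwo55343}. For the paper's application this does no harm --- $\de$ is fixed once and for all in terms of $\varep$, so one only needs the $m=0$ bound in \eqref{eq3434320004} together with $\|D\Psi_\de-I\|_{\CC^m}\le C_{r_1,m,\de}$ for $m\ge1$, which your construction certainly gives --- but you should not claim uniform-in-$\de$ control of $\|DV_\de\|_{\CC^m}$ for $m\ge1$.
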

\begin{proof} The diffeomorphism $\Psi_\de$ is constructed by smoothly deforming the ball \newline $B((0,0,r_1),r_1) \subset B^+(0,2r_1)$. Details are left to the reader, see the next figure.
\begin{center}
\includegraphics[height=5cm]{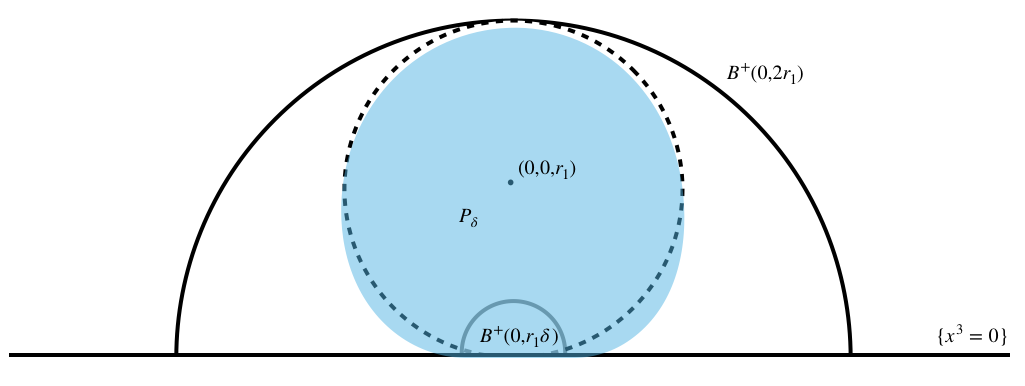} 
\captionof{figure}{The closed set $P_\de \subset B^+(0,2r_1)$ with smooth boundary is depicted as the shaded region.}
\end{center}
\end{proof}

Using $\Psi_\de$ of Lemma \ref{lem:technicalpde}, we define the chart $\varphi'_p$ as
\begin{align*}
\varphi'_p = \varphi \circ \Psi_\de: \ol{B(0,r_1)} \to V \subset \Si.
\end{align*}
Letting $\de>0$ sufficiently small depending on $\varep>0$, it holds by \eqref{Linftyboundsg2222333}, \eqref{higherregnormboundsg22223} and \eqref{eq3434320004} that for $g'= (\varphi'_p)^\ast$,
\begin{align*}
&(1-2\varep) e_{ij} \leq g'_{ij} \leq (1+ 2\varep) e_{ij} \text{ on } \ol{B(0,r_1)}, \\
&(r_1)^{1/2} \Vert \pr^2 g' \Vert_{\LL^2(B(0,r_1))} \lesssim \varep,
\end{align*}
and in case of higher regularity, for integers $m\geq1$, for $\varep>0$ sufficiently small,
\begin{align*}
\Vert g' \Vert_{\HH^{m+2}(B(0,r_1))} \leq C_{r_1,m}  \left( \sum\limits_{i=0}^m \Vert \nab^{(m)} \RRRic \Vert_{\LL^2(M)} + 1 \right).
\end{align*}

\begin{lemma} \label{lemmalowerboundvolume}
The $g$-area of $\varphi_p(P_\de) \cap \pr \Si$ is bounded from below by
\begin{align*}
\mathrm{area}_g (\varphi_p(P_\de) \cap \pr \Si) \geq (1-\varep)\left( r_1 \de \right)^2.
\end{align*}
\end{lemma}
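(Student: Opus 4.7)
The plan is to combine the inclusion $\ubp(0,r_1\de) \subset\subset P_\de$ from Lemma \ref{lem:technicalpde} with the fact that $\varphi_p$ is a boundary chart, so that $\{x^3 = 0\} \cap B^+(0,2r_1)$ is mapped into $\pr \Si$. From this, $\varphi_p(P_\de) \cap \pr \Si$ contains the image $\varphi_p(\ubp(0,r_1\de))$ of a flat two-dimensional region in the coordinate boundary $\{x^3=0\}$ whose Euclidean area is (up to a harmless dimensional constant) at least $(r_1\de)^2$.

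The second ingredient is the pointwise metric comparison in \eqref{Linftyboundsg2222333}, namely $(1-\varep) e_{ij} \leq g_{ij} \leq (1+\varep) e_{ij}$ on $B^+(0,2r_1)$. Restricting the quadratic form $g$ to the tangent space of the coordinate boundary $\{x^3=0\}$ gives the $2\times 2$ upper-left block $(g_{ab})_{a,b=1,2}$, which inherits the same comparison with $(e_{ab})_{a,b=1,2}$. Hence the induced area form satisfies $\sqrt{\det (g|_{\{x^3=0\}})} \geq (1-\varep) \sqrt{\det(e|_{\{x^3=0\}})} = (1-\varep)$.

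Consequently, I would estimate
\begin{align*}
\mathrm{area}_g (\varphi_p(P_\de) \cap \pr \Si) \geq \int_{\ubp(0,r_1\de)} \sqrt{\det (g|_{\{x^3=0\}})} \, dx^1 dx^2 \geq (1-\varep) \cdot \mathrm{area}_e (\ubp(0,r_1\de)),
\end{align*}
and the claimed lower bound $(1-\varep)(r_1\de)^2$ follows from the Euclidean area bound of $\ubp(0,r_1\de)$ guaranteed by Lemma \ref{lem:technicalpde}.

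There is no real obstacle here: the argument is a direct combination of the coordinate inclusion from the construction of $P_\de$ and the uniform metric comparison, with the only subtlety being the bookkeeping of which two-dimensional region one integrates over and the fact that $(\varphi_p)^{-1}(\pr \Si) = \{x^3 = 0\} \cap B^+(0,2r_1)$, which is immediate from $\varphi_p$ being a boundary chart obtained by restricting the chart $\varphi$ of Theorem \ref{CGexistencetheorem}.
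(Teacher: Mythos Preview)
Your proposal is correct and follows essentially the same approach as the paper: use the inclusion $\ubp(0,r_1\de)\subset P_\de$ from Lemma~\ref{lem:technicalpde} together with the pointwise metric comparison \eqref{Linftyboundsg2222333} to bound the $g$-area from below by the Euclidean area of the disk $\ubp(0,r_1\de)$ times $(1-\varep)$. The paper's proof is simply a terser version of exactly what you wrote.
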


\begin{proof} By the fact that $\ubp(0,r_1 \de) \subset P_\de$, see \eqref{volumeboundbelwo55343}, and \eqref{Linftyboundsg2222333}, we have
\begin{align*}
\mathrm{area}_g (\varphi_p(P_\de) \cap \pr \Si) \geq \mathrm{area}_g( \varphi_p(\ubp(0,r_1\de))) \geq (1-\varep) \left( r_1 \de \right)^2.
\end{align*} \end{proof}

We now turn to pick points $(p_i)_{i=1}^{N_1} \subset \Si$. Let the integer $N_1\geq1$ and $(p_i)_{i=1}^{N_1}\subset \pr \Si$ be such that 
\begin{align} \label{coverpipointschoice}
\pr \Si \subset \bigcup_{i=1}^N \varphi_{p_i}\left(\ubp\left(0,\frac{r_1 \de}{2}\right)\right).
\end{align}
By Lemma \ref{lemmalowerboundvolume}, the smallest necessary integer $N_1 \geq1$ for \eqref{coverpipointschoice} depends only on $\mathrm{area}_g(\pr \Si)$, $r_1$, $\de$ and $\varep$. Define the sets $V_i$ as
\begin{align} \label{defofVI}
V_i = \varphi'_{p_i}(\ol{B(0,r_1)}).
\end{align}

In the next lemma, we use the following definition. 
\begin{definition} For every real $s>0$, define the annulus $\AA_s$ by
\begin{align*}
\AA_{s} := \{ p \in \Si: d_g(p, \pr \Si) < s \} \subset \Si,
\end{align*}
where $d_g$ denotes the geodesic distance. \end{definition}

\begin{lemma} \label{claimcovering243434}
For $\varep>0$ sufficiently small, the constructed $(V_i)_{i=1}^{N_1}$ satisfy the following.
\begin{enumerate}
\item $\pr \Si \subset \cup_{i=1}^{N_1} V_i$.
\item For all $i\in \{1, \dots, N_1\}$, $B_g(p_i,(1-\varep) r_1 \de) \subset V_i$.
\item $\AA_{\frac{r_1 \de}{4}}  \subset \cup_{i=1}^{N_1} V_i$.
\item The Lebesgue number\footnote{Given a covering $(C_i)_{i=1}^N$ of $M$, the Lebesgue number $\ell$ is defined to be the largest real number such that for each point $p \in M$, there is an $i\in \{1, \dots, N\}$ such that $B_g(p,\ell) \subset C_i$.} $\ell$ of the cover $(V_i)_{i=1}^{N_1}$ of $\AA_{\frac{r_1 \de}{4}}$ is bounded from below by 
$$\ell \geq \frac{r_1 \de}{16}.$$
\end{enumerate}
\end{lemma}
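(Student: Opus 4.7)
The overall strategy is to exploit the quasi-isometry between the coordinate metric and the Euclidean metric on $B^+(0, 2r_1)$ guaranteed by \eqref{Linftyboundsg2222333}, together with the inclusions $B^+(0, r_1 \de) \subset P_\de$ and $\ubp(0, r_1 \de) \subset P_\de$ provided by Lemma \ref{lem:technicalpde}. Since $\varphi'_{p_i} = \varphi_{p_i} \circ \Psi_\de$ and $\Psi_\de$ is a diffeomorphism from $\ol{B(0,r_1)}$ onto $P_\de$, the set $V_i = \varphi'_{p_i}(\ol{B(0,r_1)})$ coincides with $\varphi_{p_i}(P_\de)$; this identification will be used throughout.

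Parts (1) and (2) then follow by unravelling definitions. For (1), the inclusion $\ubp(0, r_1 \de/2) \subset P_\de$ yields $\varphi_{p_i}(\ubp(0, r_1 \de/2)) \subset V_i$, and the covering property \eqref{coverpipointschoice} gives $\pr \Si \subset \cup_i V_i$. For (2), the inclusion $B^+(0, r_1 \de) \subset P_\de$ combined with Remark \ref{RemarkComparison} applied to the chart $\varphi_{p_i}$ with $r' = r_1 \de$ yields $B_g(p_i, (1-\varep) r_1 \de) \subset \varphi_{p_i}(B^+(0, r_1 \de)) \subset V_i$.

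The substance lies in parts (3) and (4), which I would prove in a unified manner. For $p \in \AA_{r_1 \de/4}$, choose $q \in \pr \Si$ with $d_g(p, q) < r_1 \de/4$; part (1) then supplies an index $i$ and a point $y \in \ubp(0, r_1 \de/2)$ with $q = \varphi_{p_i}(y)$. To establish the Lebesgue bound in (4), take an arbitrary $p' \in B_g(p, r_1 \de/16)$, so that $d_g(p', q) < 5 r_1 \de/16$; the case $p' = p$ recovers (3). Lifting a minimizing geodesic from $q$ to $p'$ through $\varphi_{p_i}^{-1}$ and using the metric equivalence to convert $g$-length into Euclidean length, the endpoint $x' := \varphi_{p_i}^{-1}(p')$ satisfies $|x' - y| \leq (1-\varep)^{-1/2} \cdot 5 r_1 \de/16 < r_1 \de/3$ for $\varep$ small. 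Consequently $|x'| < |y| + r_1 \de/3 < 5 r_1 \de/6 < r_1 \de$, and since $x' \in \HHH^+$ we conclude $x' \in B^+(0, r_1 \de) \subset P_\de$, hence $p' \in V_i$; this gives simultaneously $p \in V_i$ and $B_g(p, r_1 \de/16) \subset V_i$.

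The only technical care needed, and thus the main obstacle in the otherwise routine computation, is verifying that the minimizing geodesic from $q$ to $p'$ admits a well-defined lift to $B^+(0, 2r_1)$ under $\varphi_{p_i}^{-1}$ so that the Euclidean length estimate applies throughout. This follows by a standard continuity argument along the geodesic: the set of parameters $t$ for which the initial segment lifts and remains within Euclidean distance $r_1 \de$ of the origin is nonempty, open, and closed in its natural interval, since $r_1 \de$ sits well inside the chart radius $2 r_1$ and the metric equivalence propagates the Euclidean length bound from $|y| < r_1 \de/2$ along the geodesic.
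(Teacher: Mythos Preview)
Your proof is correct, and parts (1) and (2) match the paper essentially verbatim. For (3) and (4), however, you take a different route than the paper.

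The paper never lifts geodesics to the chart. Instead, having found $q\in\pr\Si$ with $d_g(p,q)<r_1\de/4$ and an index $i$ with $q\in\varphi_{p_i}(\ubp(0,r_1\de/2))$, it uses Remark \ref{RemarkComparison} once more to bound $d_g(q,p_i)<(1+\varep)r_1\de/2$ and then applies the triangle inequality \emph{in $(\Si,g)$}:
\[
d_g(p_i,\tilde{p})\leq d_g(p_i,q)+d_g(q,p)+d_g(p,\tilde{p})<(1+\varep)\tfrac{r_1\de}{2}+\tfrac{r_1\de}{4}+\tfrac{r_1\de}{16}<(1-\varep)r_1\de
\]
for $\varep$ small, so that $\tilde{p}\in B_g(p_i,(1-\varep)r_1\de)\subset V_i$ by the already-established part (2). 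This is cleaner: part (2) absorbs all the chart-level work, and no continuity argument for the lift is needed. Your approach instead bypasses part (2) and re-derives the containment directly in coordinates via the lifted geodesic; this is valid, but the open-closed argument in your last paragraph is precisely the overhead that the paper's route avoids. One minor correction: where you write ``part (1) then supplies an index $i$ and a point $y\in\ubp(0,r_1\de/2)$'', you are really invoking the covering \eqref{coverpipointschoice}, not part (1) of the lemma (which only gives $y\in P_\de$).
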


\begin{proof} {\bf Proof of (1).} By \eqref{volumeboundbelwo55343}, $\ubp\left(0,\frac{r_1\de}{2}\right) \subset P_\de$, so together with \eqref{coverpipointschoice}, we have
\begin{align*}
\pr \Si \subset  \bigcup_{i=1}^{N_1} \varphi_{p_i} \left(\ubp\left(0,\frac{r_1\de}{2}\right)\right) \subset  \bigcup_{i=1}^{N_1} \varphi_{p_i} (P_\de ) =\bigcup_{i=1}^{N_1} \varphi'_{p_i} (\ol{B(0,r_1)} ) =  \bigcup_{i=1}^{N_1} V_i.
\end{align*}

{\bf Proof of (2).} By Remark \ref{RemarkComparison}, \eqref{Linftyboundsg2222333}, \eqref{volumeboundbelwo55343} and \eqref{defofVI}, we have 
\begin{align*}
B_g(p_i,(1-\varep)r_1\de) \subset \varphi_{p_i} (B^+(0,(1-\varep^2)r_1\de)) \subset \varphi_{p_i}(B^+(0,r_1\de)) \subset V_i.
\end{align*}

{\bf Proof of (3).} Let $p \in \AA_{ \frac{r_1\de}{4}}$. By definition of $\AA_{\frac{r_1\de}{4}}$, there is a point $p' \in \pr \Si$ such that 
$$d_g(p,p') < \frac{r_1\de}{4}.$$ 
Further, by \eqref{coverpipointschoice} and Remark  \ref{RemarkComparison}, there is a $p_i \in \pr \Si$ such that 
$$d_g(p',p_i) < (1+\varep)\frac{r_1\de}{2}.$$
By the above two, using the triangle inequality,
\begin{align*}
d_g(p_i,p) \leq& d_g(p_i,p') + d_g(p',p) < (1+\varep)\frac{3r_1\de}{4}.
\end{align*}
Consequently, using (2) of this lemma, we have for $\varep>0$ sufficiently small,
\begin{align*}
p \in B_g\left(p_i,(1+ \varep)\frac{3r_1\de}{4} \right) \subset B_g\left(p_i,(1-\varep)r_1 \de \right) \subset \varphi_{p_i}(B^+(0,r_1\de)) \subset V_i \subset \cup_{i=1}^{N_1} V_i.
\end{align*}

{\bf Proof of (4).} Let $p \in \AA_{\frac{r_1\de}{4}}$, and let $\tilde{p} \in B_g\left( p, \frac{r_1\de}{4} \right)$. By definition of $\AA_{\frac{r_1\de}{4}}$, there is a point $p' \in \pr \Si$ such that 
$$d_g(p,p') < \frac{r_1\de}{4}.$$ 
Further, by \eqref{coverpipointschoice} and Remark  \ref{RemarkComparison}, there is a $p_i \in \pr \Si$ such that 
$$d_g(p',p_i) < (1+\varep)\frac{r_1\de}{2}.$$
Therefore, by using the triangle inequality,
\begin{align*}
d_g(p_i,\tilde{p}) \leq& d_g(p_i,p') + d_g(p',p) + d_g(p,\tilde{p}) \\
<& (1+\varep) \frac{r_1\de}{2} + \frac{r_1\de}{4} + \frac{r_1\de}{16} \\
<& (1+\varep) \frac{15 r_1\de}{16}.
\end{align*}
Consequently, using (2) of this lemma, for $\varep>0$ sufficiently small,
\begin{align*}
\tilde{p} \in B_g\left(p_i,(1+ \varep)\frac{15r_1\de}{16} \right) \subset B_g\left(p_i,(1-\varep)r_1 \de \right) \subset V_i.
\end{align*}

This finishes the proof of Lemma \ref{claimcovering243434}. \end{proof}


{\bf Construction of coordinate balls $B(0,r_2)$ away from $\pr \Si$.} First, for a given \newline $p \in \Si \setminus \AA_{\frac{r_1\de}{4}}$, we construct a coordinate system $B(0,r_2)$ in $\Si$. Then we pick points $(p_i)_{i=N_1+1}^{N_2} \subset \Si$ such that the corresponding constructed coordinate systems cover $\Si \setminus \AA_{\frac{r_1\de}{4}}$. \\

Let thus $p\in \Si \setminus \AA_{\frac{r_1\de}{4}}$, and let $\varep>0$ small to be determined. By Theorem \ref{CGexistencetheorem}, there is a radius
$$r=r(\Vert \RRRic \Vert_{\LL^2(\Si)}, \Vert \Theta \Vert_{\LL^4(\pr \Si)}, r_{vol}(\Si,1), \varep)>0$$
and a chart $\varphi: B^+(0,r) \to U \subset \Si$ with $\varphi(0)=p$ and 
\begin{align} \begin{aligned} 
&(1-\varep) e_{ij} \leq g_{ij} \leq (1+ \varep) e_{ij}, \\
&r^{-1/2} \Vert \pr g \Vert_{\LL^{2}(B^+(x,r))} + r^{1/2} \Vert \pr^2 g \Vert_{\LL^2(B^+(x,r))} \leq \varep,
\end{aligned} \label{Linftyboundsg22223334} \end{align}
and in case of higher regularity,
\begin{align} \label{higherregnormboundsg22223444}
\Vert g \Vert_{\HH^{m+2}(B^+(x,r))} \leq C_{r} \sum\limits_{i=0}^m \Vert \nab^{(m)} \RRRic \Vert_{\LL^2(M)} + C_{r,m} \varep.
\end{align}

We define the radius $r_2>0$ by
\begin{align} \label{defofr2}
r_2=\min \left\{ r, \frac{r_1\de}{8}, \frac{\varep^4}{\Vert k \Vert_{L^4(M)}^4 + \Vert \nab k \Vert_{L^2(M)}^2} \right\}.
\end{align}
For $\varep>0$ sufficiently small, by Remark \ref{RemarkComparison} and the fact that $dist_g(p,\pr \Si) > r_2$, the chart 
\begin{align*} 
\varphi_p: B(0,r_2) \to U \subset \Si,
\end{align*}
defined as the restriction of $\varphi$ to $B(0,r_2)$, is well-defined. Moreover, by \eqref{Linftyboundsg22223334} and \eqref{higherregnormboundsg22223444},
\begin{align*} 
&(1-\varep) e_{ij} \leq g_{ij} \leq (1+ \varep) e_{ij}, \\
&(r_2)^{1/2} \Vert \pr^2 g \Vert_{\LL^2(B(0,r_2))} \leq \varep,
\end{align*}
and in case of higher regularity, for integers $m\geq1$,
\begin{align*}
\Vert g \Vert_{\HH^{m+2}(B(0,r_2))} \leq C_{r_2} \sum\limits_{i=0}^m \Vert \nab^{(m)} \RRRic \Vert_{L^2(M)} + C_{r_2,m} \varep.
\end{align*}

We now turn to pick the points $(p'_i)_{i=N_1+1}^{N_2}$. Let the integer $N_2\geq0$ and $(p_i)_{i=N_1+1}^{N_2}$ be such that 
\begin{align*}
\Si \setminus \AA_{\frac{r_1 \de}{4}}\subset \bigcup_{i=1}^N \varphi_{p_i} \left(B\left(0,\frac{r_2}{2}\right)\right).
\end{align*}
The integer $N_2\geq1$ depends only on $\mathrm{area}_g(\pr \Si), \mathrm{vol}_g \Si, \varep$ and the low regularity geometric bounds assumed in Theorem \ref{thm:bl2locintro1}. Define the sets $U_i$ as
\begin{align*}
U_i := \varphi_{p_i} (B(0,r_2)).
\end{align*}

We have the next result. Its proof is similar to Lemma \ref{claimcovering243434} and left to the reader.
\begin{lemma} For $\varep>0$ sufficiently small, the constructed $(U_i)_{i=N_1+1}^{N_2}$ satisfy the following.
\begin{itemize}
\item $\Si \setminus \AA_{\frac{r_1 \de}{4}} \subset \cup_{i=N_1+1}^{N_2} U_i$.
\item The Lebesgue number $\ell$ of the cover $(U_i)_{i=N_1+1}^{N_2}$ of $\Si \setminus \AA_{\frac{r_1 \de}{4}}$ is bounded by
\begin{align*}
\ell \geq \frac{r_2}{16}
\end{align*}
\end{itemize}
\end{lemma}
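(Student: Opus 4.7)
The plan is to mirror the argument of Lemma \ref{claimcovering243434}, but simplified because we are now working with interior coordinate balls rather than half-ball charts near $\pr \Si$. The two assertions will be handled separately, with part (1) being immediate from the choice of the $p_i$ and part (2) following from a short triangle-inequality computation, where the crucial quantitative input is the comparison of geodesic and Euclidean distances in Remark \ref{RemarkComparison} together with the fact that $r_2 \leq r_1 \de/8$ guarantees that every chart $\varphi_{p_i}$ lies strictly in the interior of $\Si$.

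First, I would verify part (1). By construction of the covering, the points $p_i$ for $i = N_1+1, \dots, N_2$ were chosen so that
\begin{align*}
\Si \setminus \AA_{\frac{r_1 \de}{4}} \subset \bigcup_{i=N_1+1}^{N_2} \varphi_{p_i}\!\left( B\!\left(0, \tfrac{r_2}{2}\right) \right) \subset \bigcup_{i=N_1+1}^{N_2} \varphi_{p_i}\!\left( B(0, r_2) \right) = \bigcup_{i=N_1+1}^{N_2} U_i,
\end{align*}
which is precisely the claim. I would also record here that each $p_i \in \Si \setminus \AA_{\frac{r_1\de}{4}}$ satisfies $d_g(p_i, \pr \Si) \geq r_1\de/4 > r_2$ by the defining inequality \eqref{defofr2}, so each ball $B_g(p_i, r_2)$ is contained in the open interior of $\Si$ and the chart $\varphi_{p_i}$ is genuinely defined on the full ball $B(0, r_2)$.

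Next, for part (2), I would fix $p \in \Si \setminus \AA_{\frac{r_1 \de}{4}}$ together with an arbitrary $\tilde{p} \in B_g(p, r_2/16)$ and show $\tilde{p} \in U_i$ for some $i$. Choose $p_i$ with $p \in \varphi_{p_i}(B(0, r_2/2))$, which exists by the defining property of the cover. Remark \ref{RemarkComparison} combined with the $L^\infty$-closeness of $g$ to the Euclidean metric in the chart gives $d_g(p, p_i) < (1+\varep) r_2/2$. The triangle inequality then yields
\begin{align*}
d_g(\tilde{p}, p_i) \leq d_g(\tilde{p}, p) + d_g(p, p_i) < \frac{r_2}{16} + (1+\varep) \frac{r_2}{2} < (1-\varep) r_2,
\end{align*}
provided $\varep>0$ is chosen sufficiently small. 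Applying Remark \ref{RemarkComparison} in the other direction, $B_g(p_i, (1-\varep) r_2) \subset \varphi_{p_i}(B(0, r_2)) = U_i$, hence $\tilde{p} \in U_i$. Since $\tilde{p}$ was arbitrary in $B_g(p, r_2/16)$, this ball is contained in $U_i$, so the Lebesgue number is at least $r_2/16$.

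I do not anticipate any real obstacle: the construction has been engineered exactly so that the radius $r_2$ is small compared to $r_1 \de$ (placing the charts in the interior) and the sub-ball of radius $r_2/2$ already exhausts $\Si \setminus \AA_{\frac{r_1\de}{4}}$, leaving a full $r_2/2$ of slack for the triangle inequality. The only mild care needed is the smallness of $\varep$ used to pass between geodesic and Euclidean distances via Remark \ref{RemarkComparison}; as in Lemma \ref{claimcovering243434}, this can be absorbed by further shrinking $\varep$, which only tightens the constants already chosen in the construction of the charts.
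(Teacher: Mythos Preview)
Your proposal is correct and follows exactly the approach the paper intends: the paper omits the proof entirely, stating only that it is ``similar to Lemma \ref{claimcovering243434} and left to the reader,'' and your argument is precisely that adaptation, with the simplifications appropriate to interior balls. The triangle-inequality computation and the use of Remark \ref{RemarkComparison} are just as in the paper's proof of Lemma \ref{claimcovering243434}.
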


To summarise the above, we have a constructed a cover of $\Si$ by coordinate systems $\ol{B(0,r_1)}$ and $B(0,r_2)$. 
\begin{lemma} \label{FullLebesgueControl} The Lebesgue number of the constructed cover $(U_i)_{i=1}^{N_1}, (V_i)_{i=N_1+1}^{N_2}$ of $\Si$ is bounded by
\begin{align*}
\ell \geq \frac{r_2}{16}.
\end{align*}
\end{lemma}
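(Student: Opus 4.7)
The plan is to argue by a simple case-split based on the position of an arbitrary point $p\in\Si$ relative to the collar neighbourhood $\AA_{r_1\de/4}$ of $\pr\Si$, invoking the two local Lebesgue-number estimates that were already established. The key numerical input is the defining inequality \eqref{defofr2} for $r_2$, which forces $r_2 \leq r_1\de/8$, and hence in particular $r_2/16 \leq r_1\de/16$.

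Concretely, recall that the family $(V_i)_{i=1}^{N_1}$ was constructed to cover $\AA_{r_1\de/4}$ with Lebesgue number at least $r_1\de/16$ by Lemma \ref{claimcovering243434}(4), while the family $(U_i)_{i=N_1+1}^{N_2}$ was constructed to cover $\Si\setminus\AA_{r_1\de/4}$ with Lebesgue number at least $r_2/16$ by the preceding lemma. Since $\AA_{r_1\de/4}$ is open in $\Si$, the two regions $\AA_{r_1\de/4}$ and $\Si\setminus\AA_{r_1\de/4}$ partition $\Si$, so every $p\in\Si$ falls into exactly one of them. In the first case, there exists $i\in\{1,\ldots,N_1\}$ with $B_g(p,r_1\de/16)\subset V_i$, and using $r_2/16 \leq r_1\de/16$ I immediately obtain $B_g(p,r_2/16)\subset V_i$. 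In the second case, there exists $i\in\{N_1+1,\ldots,N_2\}$ with $B_g(p,r_2/16)\subset U_i$ directly. Either way a member of the combined cover contains the geodesic ball $B_g(p,r_2/16)$, which is exactly the claimed bound $\ell \geq r_2/16$.

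There is no genuine obstacle here, as the lemma is only a clean bookkeeping step merging two local statements; the only point worth double-checking is the inequality $r_2 \leq r_1\de/8$ built into the definition \eqref{defofr2}, which is precisely what guarantees that the smaller scale $r_2/16$ is still admissible for the boundary family $(V_i)_{i=1}^{N_1}$ on the collar $\AA_{r_1\de/4}$.
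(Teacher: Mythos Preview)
Your proof is correct and follows essentially the same approach as the paper: a case split according to whether $p\in\AA_{r_1\de/4}$ or $p\in\Si\setminus\AA_{r_1\de/4}$, invoking the two previously established Lebesgue-number bounds together with the inequality $r_2\leq r_1\de/8$ from \eqref{defofr2}. In fact you spell out the numerical comparison $r_2/16\leq r_1\de/16$ more explicitly than the paper's terse reference to ``$r_2\leq r_1$.''
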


\begin{proof} For every point $p \in \Si$, either $p \in \AA_{\frac{r_1\de}{4}}$ or $p \in \Si \setminus \AA_{\frac{r_1\de}{4}}$. Therefore, by construction of $(U_i)_{i=1}^N$, $(V_i)_{i=1}^N$ and the definition of $r_2 \leq r_1$, see \eqref{defofr2}, there exists an $i \in \{1, \dots, N_2\}$ such that either $B_g\left(p,\frac{r_2}{16}\right) \subset V_i$ or $B_g\left(p,\frac{r_2}{16}\right) \subset U_i$. \end{proof}


\section{The scaling to small data} \label{sec:ReductionToSmallBalls} 

The Einstein equations \eqref{EinsteinVacuumEquationsIntroJ} are invariant under the scaling
\begin{align} \label{spacetimescaling}
\mathbf{g}(t,x) \to \mathbf{g}_\la(t,x):= \mathbf{g}(\la t, \la x),
\end{align}
where $\la>0$ is a real number. As a consequence, the constraint equations are invariant under the scaling
\begin{align} \label{initialdatarescaling}
(g,k)(x) \to (g_{(\la)}, k_{(\la)})(x):= (g,\la k)(\la x).
\end{align}

The main result of this section is the following.

\begin{lemma}[Scaling to small data] \label{lemmaScalingSmall}
Let $r>0$ and let $(B(0,r),g,k)$ be initial data such that for some $0<\varep<1$, 
\begin{align*}
&(1-\varep)e_{ij} \leq g_{ij} \leq (1+\varep)e_{ij} \\
&r^{1/2} \Vert \pr^2 g \Vert_{\LL^2(B(0,r))} \leq \varep.
\end{align*}
Then, for 
\begin{align*}
\la = \min \left\{ r, \frac{\varep^4}{\Vert k \Vert^4_{L^4(B(0,r))}+ \Vert \nab k \Vert_{L^2(B(0,r))}^2} \right\},
\end{align*}
the rescaled initial data set $(B(0,\la^{-1}r),g_{(\la)}, k_{(\la)})$ satisfies
\begin{align*}
\Vert g_{(\la)}-e \Vert_{\HH^2(B(0,\la^{-1}r))} + \Vert k \Vert_{\HH^1(B(0,\la^{-1}r))} \lesssim \varep.
\end{align*}
In case of higher regularity, we have for integers $m\geq0$
\begin{align*}
\Vert g_{(\la)}-e \Vert_{\HH^{m+2}(B(0,\la^{-1}r))} \leq C_{\la,r} \sum\limits_{i=0}^m \Vert \nab^{(m)} \RRRic \Vert_{L^2(M)} + C_{\la,r,m} \varep.
\end{align*}
\end{lemma}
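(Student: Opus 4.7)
The proof is a direct scaling computation using the change of variables $y = \lambda x$, which carries $B(0, \lambda^{-1} r)$ diffeomorphically onto $B(0, r)$ with Jacobian $\lambda^3$. Differentiation of $g_{(\lambda)}(x) = g(\lambda x)$ and $k_{(\lambda)}(x) = \lambda k(\lambda x)$ combined with this change of variables yields the scaling identities
\[
\Vert \pr^{(j)} g_{(\lambda)} \Vert_{\LL^2(B(0,\lambda^{-1}r))} = \lambda^{j-3/2} \Vert \pr^{(j)} g \Vert_{\LL^2(B(0,r))}, \quad j = 0,1,2,
\]
\[
\Vert \pr^{(j)} k_{(\lambda)} \Vert_{\LL^2(B(0,\lambda^{-1}r))} = \lambda^{j-1/2} \Vert \pr^{(j)} k \Vert_{\LL^2(B(0,r))}, \quad j = 0,1.
\]
The task then reduces to inserting the hypotheses on $g$ and $k$ into these identities and checking that every term is $\lesssim \varepsilon$.

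For the top-order derivatives the argument is immediate. The assumption $r^{1/2} \Vert \pr^2 g \Vert_{\LL^2(B(0,r))} \leq \varepsilon$ together with $\lambda \leq r$ gives $\lambda^{1/2} \Vert \pr^2 g \Vert_{\LL^2(B(0,r))} \leq (\lambda/r)^{1/2} \varepsilon \leq \varepsilon$; the choice $\lambda \leq \varepsilon^4 / \Vert \nab k \Vert_{L^2}^2$ gives $\lambda^{1/2} \Vert \nab k \Vert_{\LL^2(B(0,r))} \leq \varepsilon^2 \leq \varepsilon$. For the zeroth-order $k_{(\lambda)}$ term in $\LL^2$, I would apply H\"older's inequality $\Vert k \Vert_{\LL^2(B(0,r))} \lesssim r^{3/4} \Vert k \Vert_{\LL^4(B(0,r))}$, then combine with $\lambda \leq r$ and $\lambda \leq \varepsilon^4 / \Vert k \Vert_{L^4}^4$ to close. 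For the remaining zeroth- and first-order $g$ contributions, I would exploit the pointwise control $\vert g - e \vert \lesssim \varepsilon$ from the hypothesis, which yields $\Vert g - e \Vert_{\LL^2(B(0,r))} \lesssim \varepsilon r^{3/2}$; a Gagliardo--Nirenberg interpolation between the pointwise bound and the top-order $\pr^2 g$ estimate then gives $\Vert \pr g \Vert_{\LL^2(B(0,r))} \lesssim \varepsilon r^{1/2}$, and substituting into the scaling identity with $\lambda \leq r$ closes these estimates as well.

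For the higher-regularity assertion, I would invoke the schematic expression $\RRRic = \pr^2 g + Q(g^{-1}, \pr g)$ available in the coordinates furnished by Theorem \ref{CGexistencetheorem}, so that an elliptic bootstrap on $B(0,r)$ controls $\Vert \pr^{m+2} g \Vert_{\LL^2(B(0,r))}$ in terms of $\sum_{i \leq m} \Vert \nab^{(i)} \RRRic \Vert_{\LL^2(M)}$ plus $\varepsilon$-small lower-order contributions. The scaling identities above then yield the stated bound with constants depending on $\lambda, r$ and $m$.

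The main obstacle is the careful accounting of the $r/\lambda$ factors produced by the negative-exponent terms at orders $0$ and $1$, which would a priori be amplified under the scaling. These are tamed by combining the constraint $\lambda \leq r$ with the pointwise bound on $g - e$ and the H\"older/interpolation inequalities above, so that each factor is either absorbed or inherits the desired $\varepsilon$-smallness in the regime of interest.
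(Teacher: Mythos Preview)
Your proposal is correct and follows essentially the same route as the paper: a direct change-of-variables computation giving the scaling identities, immediate control of the top-order terms $\partial^2 g_{(\lambda)}$ and $\nabla k_{(\lambda)}$ from the hypotheses, H\"older to pass from $L^2$ to $L^4$ for $k_{(\lambda)}$, and the pointwise bound on $g-e$ combined with interpolation to handle the zeroth- and first-order $g$ contributions. The paper's proof is in fact terser than yours---it simply records the three top-order computations, notes the preserved $L^\infty$ bound on $g_{(\lambda)}-e$, and then invokes ``interpolation'' in one line for the remaining $L^2$ and $\partial g$ terms, leaving higher regularity to the reader---so your write-up is if anything more explicit about the Gagliardo--Nirenberg step and the elliptic bootstrap for higher regularity.
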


\begin{proof} By the scaling \eqref{initialdatarescaling}, we have 
\begin{align}\begin{aligned}
\Vert k_{(\la)} \Vert_{\LL^2(B(0,\la^{-1}r))} \lesssim& \Vert k_{(\la)} \Vert_{\LL^4(B(0,\la^{-1}r))}\\
\lesssim& \la^{1/4} \Vert k \Vert_{\LL^4(B(0,r))}\\
\lesssim& \varep, \\
\Vert \nab k_{(\la)} \Vert_{\LL^2(B(0,\la^{-1}r))} =& \la^{1/2} \Vert \nab k \Vert_{\LL^2(B(0,r))} \\
\leq& \varep, \\
\Vert \pr^2 g_{(\la)} \Vert_{\LL^2(B(0,\la^{-1}r))} =& \la^{1/2} \Vert \pr^2 g \Vert_{\LL^2(B(0,r))} \\
\leq& \la^{1/2} \frac{\varep}{r^{1/2}}\\
\leq& \varep.
\end{aligned} \label{scaling1222} \end{align}

Moreover, we have on $B(0,\la^{-1}r)$
\begin{align} \label{scaling1222222}
(1-\varep)e_{ij} \leq (g_{(\la)})_{ij} \leq (1+\varep) (g_{(\la)})_{ij}.
\end{align}
It follows from \eqref{scaling1222} and \eqref{scaling1222222} by interpolation that
\begin{align*}
\Vert g_{(\la)}-e \Vert_{\LL^2(B(0,\la^{-1}r))} + \Vert \pr g_{(\la)} \Vert_{\LL^2(B(0,\la^{-1}r))} \lesssim \varep.
\end{align*}
The higher regularity estimate is similar and left to the reader. This finishes the proof of Lemma \ref{lemmaScalingSmall}.
\end{proof}


\section{The conclusion of the proof of Theorem \ref{thm:bl2locintro1}} \label{SectionConclusionOfTheorem} 

In this section we conclude the proof of Theorem \ref{thm:bl2locintro1} by combining the results of the previous sections. We first have the following proposition.
\begin{proposition} \label{ingredient1}
Let $(\Si,g,k)$ be a maximal initial data set such that $(\Si,g)$ is a compact Riemannian manifold with boundary and
\begin{align*}
\Vert \RRRic \Vert_{L^2(\Si)} < \infty, \Vert \Th \Vert_{L^4(\pr \Si)} <\infty, \Vert k \Vert_{L^4(\Si)} < \infty, \Vert \nab k \Vert_{L^2(\Si)} < \infty, r_{vol}(\Si,1)>0.
\end{align*}
Let
\begin{align*}
( \varphi_i: \ol{B(0,r_1)} \to V_i \subset \Si )_{i=1}^{N_1},
( \varphi_i: B(0,r_2) \to U_i \subset \Si )_{i=N_1+1}^{N_2}
\end{align*}
be the constructed cover of $\Si$, see Section \ref{subsec5893434}. Then, the future domains of dependence $\DD(V_i)$ and $\DD(U_i)$ are each foliated by spacelike maximal hypersurfaces $\Si_t$ defined as level sets of a time function $t$ with $\Si_0=U_i$ (or $\Si_0=V_i$) such that
\begin{enumerate}
\item {\bf $L^2$-regularity.} there is a constant 
\begin{align*}
C=C(\Vert \RRRic \Vert_{L^2(\Si)}, \Vert k \Vert_{L^4(\Si)}, \Vert \nab k \Vert_{L^2(\Si)}, \Vert \Th \Vert_{L^4(\pr \Si)}, r_{vol}(\Si,1))>0
\end{align*}
and a time 
\begin{align*}
T=T(\Vert \RRRic \Vert_{L^2(\Si)}, \Vert k \Vert_{L^4(\Si)}, \Vert \nab k \Vert_{L^2(\Si)}, \Vert \Th \Vert_{L^4(\pr \Si)}, r_{vol}(\Si,1))>0
\end{align*}
such that on $0 \leq t \leq T$, we have
\begin{align} \label{quantbound12222}
\Vert \RRRic_t \Vert_{L^\infty_tL^2(\Si_t)} \leq C, \Vert k_t \Vert_{L^\infty_tL^4(\Si_t)} \leq C, \Vert \nab k_t \Vert_{L^\infty_t L^2(\Si_t)} \leq C, \inf\limits_{0\leq t \leq T}r_{vol}(\Si_t,1) \geq \frac{1}{C}.
\end{align}
\item {\bf Higher regularity.} In case of higher regularity, for integers $m\geq0$, on $0 \leq t \leq T$,
\begin{align} \label{quantbound22222}
\sum\limits_{\vert \a \vert \leq m} \Vert \D^{(\a)} \mathbf{R} \Vert_{L^\infty_t L^2(\Si_t)} \leq C_m \left( \sum\limits_{\vert i \vert \leq m} \Vert \nab^{(i)} \RRRic \Vert_{L^2(\Si)} + \Vert \nab^{(i)} \nab k \Vert_{L^2(\Si)}+1 \right).
\end{align}
\end{enumerate}
\end{proposition}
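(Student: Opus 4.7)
The plan is to handle each coordinate patch in the cover constructed in Section \ref{subsec5893434} independently, reducing it to the small-data result of Proposition \ref{prop:red} by means of the scaling of Lemma \ref{lemmaScalingSmall}. Fix an index $i$ and let $\varphi_i$ denote the associated chart, with source either $\ol{B(0,r_1)}$ or $B(0,r_2)$; write $r_i \in \{r_1,r_2\}$ for the corresponding radius. Pull back the initial data by setting $(\bar g_i,\bar k_i) := (\varphi_i^* g, \varphi_i^* k)$ on $B(0,r_i)$. By the bounds established in Section \ref{subsec5893434} (in the boundary case, after composition with $\Psi_\de$; in the interior case directly), the pulled-back metric satisfies both the pointwise comparison $(1-\varep)e \leq \bar g_i \leq (1+\varep)e$ and the Sobolev bound $r_i^{1/2}\Vert\pr^2 \bar g_i\Vert_{\LL^2(B(0,r_i))} \lesssim \varep$. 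Moreover, by the defining choice of $r_i$, one has $r_i \leq \varep^4/(\Vert k\Vert_{L^4(\Si)}^4 + \Vert\nab k\Vert_{L^2(\Si)}^2)$, which is exactly the quantity against which $\la$ is taken in Lemma \ref{lemmaScalingSmall}.

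With these preparations in hand, apply Lemma \ref{lemmaScalingSmall} with $\la = r_i$. This produces rescaled initial data $(\bar g_{i,(\la)}, \bar k_{i,(\la)})$ on $B(0,1)$ satisfying
\begin{align*}
\Vert \bar g_{i,(\la)}-e\Vert_{\HH^2(B(0,1))} + \Vert \bar k_{i,(\la)}\Vert_{\HH^1(B(0,1))} \lesssim \varep.
\end{align*}
Choosing $\varep$ smaller than the threshold in Proposition \ref{prop:red}, the rescaled data lies in the small-data regime of that proposition, which then yields a maximal foliation of the future domain of dependence of $B(0,1)$ on $0\leq \tilde t \leq 1/2$ with all the quantitative bounds stated there. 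Undoing the rescaling via the spacetime scaling \eqref{spacetimescaling} with parameter $\la=r_i$ produces a maximal foliation of $\DD(V_i)$ (respectively $\DD(U_i)$) on $0\leq t \leq r_i/2$, and the norms $\Vert\RRRic_t\Vert_{L^2(\Si_t)}$, $\Vert k_t\Vert_{L^4(\Si_t)}$, $\Vert\nab k_t\Vert_{L^2(\Si_t)}$, together with the volume radius, transform by explicit powers of $\la$. Since $\la\in\{r_1,r_2\}$ depends only on the geometric quantities appearing in the hypotheses of Theorem \ref{thm:bl2locintro1}, setting $T := \min(r_1,r_2)/2$ and collecting the $\la$-dependent constants produces \eqref{quantbound12222}. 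The higher-regularity estimate \eqref{quantbound22222} follows in the same way by composing the higher-regularity statements of Section \ref{subsec5893434}, Lemma \ref{lemmaScalingSmall}, and Proposition \ref{prop:red}.

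The principal technical care lies in tracking the scaling of all geometric norms and of the volume radius under both the initial-data scaling \eqref{initialdatarescaling} and the spacetime scaling \eqref{spacetimescaling}, and in verifying that every emergent constant is expressible in terms of $\la$ and $\varep$, both of which depend only on the quantities in the hypotheses. A secondary subtlety is that the patches $V_i$ touch $\pr \Si$ whereas Proposition \ref{prop:red} is stated for data on the open ball $B(0,1)$; this is handled by noting that the chart $\varphi'_{p_i}$ from Section \ref{subsec5893434} endows $\ol{B(0,r_1)}$ with smooth initial data, so restricting to the open ball $B(0,r_1)$ yields an admissible input for Proposition \ref{prop:red}, and the two cases proceed identically from this point on.
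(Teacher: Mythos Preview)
Your proposal is correct and follows essentially the same route as the paper's proof: pull back to the coordinate ball, rescale with $\la=r_i$ via Lemma \ref{lemmaScalingSmall} to land in the small-data regime of Proposition \ref{prop:red}, then undo the scaling to obtain control on $\DD(U_i)$ (or $\DD(V_i)$) up to time $r_i/2$. The paper carries this out explicitly for $U_i$ with $\la=r_2$ and declares the $V_i$ case ``similar and left to the reader''; your uniform treatment of both cases, together with the explicit remark on why the boundary patches $V_i$ still feed into Proposition \ref{prop:red}, is if anything slightly more complete.
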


The above proposition implies the proof of Theorem \ref{thm:bl2locintro1} as follows.
\begin{proof}[Proof of Theorem \ref{thm:bl2locintro1}] Let $(\Si,g,k)$ be maximal initial data such that $(\Si,g)$ is a compact Riemannian manifold with boundary such that 
\begin{align*}
\Vert \RRRic \Vert_{L^2(\Si)} < \infty, \Vert \Th \Vert_{L^4(\pr \Si)} <\infty, \Vert k \Vert_{L^4(\Si)} < \infty, \Vert \nab k \Vert_{L^2(\Si)} < \infty, r_{vol}(\Si,1)>0.
\end{align*}
Let $(V_i)_{i=1}^{N_1}$ and $(U_i)_{i=N_1+1}^{N_2}$ be the cover of $\Si$ constructed in Section \ref{subsec5893434}. \\

On the one hand, by Proposition \ref{ingredient1}, it follows that the future domains of dependence of $V_i$ and $U_i$ are controlled with quantitative bounds \eqref{quantbound12222} and \eqref{quantbound22222}. \\

On the other hand, by the construction of the cover, see Lemma \ref{FullLebesgueControl}, there is 
$$\tilde{r}=\tilde{r}(\Vert \RRRic \Vert_{L^2(\Si)}, \Vert k \Vert_{L^4(\Si)}, \Vert \nab k \Vert_{L^2(\Si)}, \Vert \Th \Vert_{L^4(\pr \Si)}, r_{vol}(\Si,1))>0$$
such that for every $p\in \Si$, there is a $V_i$ or $U_i$ such that
$$B_g(p,\tilde{r}) \subset V_i \text{ or } B_g(p,\tilde{r}) \subset U_i.$$
In particular, it holds that the future domain of dependence 
$$\DD(B_g(p,r)) \subset \DD(U_i) \text{ or }\DD(B_g(p,r)) \subset \DD(V_i),$$ 
and thus is foliated by spacelike maximal hypersurfaces $\Si_t$ defined as level sets of a time function $t$ with bounds \eqref{quantbound12222} and \eqref{quantbound22222}. We remark that the control of the volume radius follows as in the proof of Theorem \ref{thmL2smalldata} by a control of $\bf{g}_{\mu \nu}$ in $C^0$. We refer the reader to the estimates in Section 4 of \cite{J3}. This finishes the proof of Theorem \ref{thm:bl2locintro1}.
\end{proof}

It remains to prove Proposition \ref{ingredient1}.
\begin{proof}[Proof of Proposition \ref{ingredient1}] Consider the future domain of dependence $\DD(U_i)$ of $U_i \subset \Si$. By using the scaling \eqref{initialdatarescaling} with $\la = r_2$, it follows by Lemma \ref{lemmaScalingSmall} and our choice of $r_2>0$ in \eqref{defofr2}, that the rescaled initial data $(B(0,1),g_{(r_2)},k_{(r_2)})$ satisfies
\begin{align*}
\Vert g_{(r_2)}-e \Vert_{\HH^2(B(0,1))} + \Vert k_{(r_2)} \Vert_{\HH^1(B(0,r))} \lesssim \varep,
\end{align*}
and in case of higher regularity, for integers $m\geq1$,
\begin{align*}
\Vert g_{(r_2)}-e \Vert_{\HH^{m+2}(B(0,1))} + \Vert k_{(r_2)} \Vert_{\HH^{m+1}(B(0,1))} \leq& C_{r_2,m} \Big( \sum\limits_{i=1}^m \Vert \nab^{(i)}\RRRic \Vert_{L^2(\Si)} + \Vert \nab^{(i)} \nab k \Vert_{L^2(\Si)} +1 \Big).
\end{align*}
Therefore for $\varep>0$ sufficiently small, by Proposition \ref{prop:red}, the future domain of dependence of $B(0,1)$ is locally foliated by spacelike maximal hypersurfaces $\Si_t$ defined as level sets of a time function $t$ such that on $0 \leq t \leq 1/2$,
\begin{align*} 
\Vert \RRRic_{(r_2)} \Vert_{L^\infty_{t}L^2(\Si_t)} \lesssim \varep,  \Vert k_{(r_2)} \Vert_{L^\infty_{t}L^4(\Si_t)}\lesssim \varep, \Vert \nab k_{(r_2)} \Vert_{L^\infty_{t}L^2(\Si_t)} \lesssim \varep, \inf\limits_{0\leq t \leq 1/2}  r_{vol}(\Si_t,1) \geq 1/8,
\end{align*}
and in case of higher regularity, for integers $m\geq1$, on $0 \leq t \leq 1/2$,
\begin{align*}
\sum\limits_{\vert \a \vert \leq m} \Vert \D^{(\a)} \mathbf{R}_{(r_2)} \Vert_{L^\infty_{t}L^2(\Si_t)}
\leq& C_m \Big(  \Vert g_{(r_2)}-e \Vert_{\HH^{m+2}(B(0,1))} + \Vert k_{(r_2)} \Vert_{\HH^{m+1}(B(0,1))} \Big) \\
\leq& C_{r_2,m} \Big(\sum\limits_{i=1}^m \Vert \nab^{(i)}\RRRic \Vert_{L^2(\Si)} + \Vert \nab^{(i)} \nab k \Vert_{L^2(\Si)} +1 \Big).
\end{align*}

Using the spacetime scaling \eqref{spacetimescaling} with $\la = (r_2)^{-1}$, it follows that the future domain of dependence of $B(0,r_2)$ is controlled up to time 
$$T=\frac{r_2}{2}=\frac{r_2}{2}(\Vert \RRRic \Vert_{L^2(\Si)}, \Vert k \Vert_{L^4(\Si)}, \Vert \nab k \Vert_{L^2(\Si)}, \Vert \Th \Vert_{L^4(\pr \Si)}, r_{vol}(\Si,1))>0$$
with bounds on the interval $0 \leq t \leq T$,
\begin{align*} 
\Vert \RRRic_t \Vert_{L^\infty_t L^2(\Si_t)} \leq C, \Vert k_t \Vert_{L^\infty_tL^4(\Si_t)} \leq C, \Vert \nab k_t \Vert_{L^\infty_tL^2(\Si_t)} \leq C, \inf\limits_{0\leq t \leq T}r_{vol}(\Si_t,1) \geq \frac{1}{C},
\end{align*}
where 
\begin{align} \label{constantref}
C=C(\Vert \RRRic \Vert_{L^2(\Si)}, \Vert k \Vert_{L^4(\Si)}, \Vert \nab k \Vert_{L^2(\Si)}, \Vert \Th \Vert_{L^4(\pr \Si)}, r_{vol}(\Si,1))>0.
\end{align}
Moreover, in case of higher regularity, for integers $m\geq0$, on $0 \leq t \leq T$,
\begin{align*}
\sum\limits_{\vert \a \vert \leq m} \Vert \D^{(\a)} \mathbf{R} \Vert_{L^\infty_t L^2(\Si_t)} \leq C_m \left( \sum\limits_{\vert i \vert \leq m} \Vert \nab^{(i)} \RRRic \Vert_{L^2(\Si)} + \Vert \nab^{(i)} \nab k \Vert_{L^2(\Si)}+1 \right),
\end{align*}
where the constant $C_m>0$ depends on $m$ and the previous $C>0$ in \eqref{constantref}.\\

This finishes the control of the future domain of dependence $\DD(U_i)$. The control of $\DD(V_i)$ is similar and left to the reader. This finishes the proof of Proposition \ref{ingredient1}. \end{proof}


\end{document}